\newtheorem{theorem}{Theorem}[section]
\newtheorem{lemma}[theorem]{Lemma}
\newtheorem{proposition}[theorem]{Proposition}
\newtheorem{corollary}[theorem]{Corollary}
\theoremstyle{plain}
\theoremstyle{definition}
\newtheorem{definition}[theorem]{Definition}
\newtheorem{remark}[theorem]{Remark}
\numberwithin{equation}{section}
\renewcommand{\theenumi}{(\roman{enumi})}
\renewcommand{\labelenumi}{\textup{(\theenumi)}}
\title{On the homotopy groups of the automorphism groups of Cuntz--Krieger algebras \\
}
\author{Kengo Matsumoto \\
Department of Mathematics \\
Joetsu University of Education \\
Joetsu, Niigata 943-8512, Japan
\and
Taro Sogabe \\
Department of Mathematics \\
Kyoto University \\
Kyoto,  606-8502, Japan}
\begin{document}


\maketitle

\date{}

\def\det{{{\operatorname{det}}}}

\begin{abstract}
In this paper, we first present the homotopy groups of the automorphism groups 
of Cuntz--Krieger algebras
in terms of the underlying matrices of the Cuntz--Krieger algebras.
We also show that 
the homotopy groups  
are complete invariants 
of the isomorphism class of the Cuntz--Krieger algebras.
As a result,  the isomorphism type of  Cuntz--Krieger algebras 
are completely characterized by 
the group structure of the weak extension groups and the strong extension groups.
\end{abstract}

{\it Mathematics Subject Classification}:
Primary 46L80; Secondary 19K33, 19K35.

{\it Keywords and phrases}: Homotopy group, K-group, KK-theory, extension group, $C^*$-algebra, 
Cuntz--Krieger algebra, automorphism group.



\newcommand{\Ker}{\operatorname{Ker}}
\newcommand{\sgn}{\operatorname{sgn}}
\newcommand{\Ad}{\operatorname{Ad}}
\newcommand{\ad}{\operatorname{ad}}
\newcommand{\orb}{\operatorname{orb}}
\newcommand{\rank}{\operatorname{rank}}

\def\Re{{\operatorname{Re}}}
\def\det{{{\operatorname{det}}}}
\newcommand{\K}{\operatorname{K}}
\newcommand{\bbK}{\mathbb{K}}
\newcommand{\N}{\mathbb{N}}
\newcommand{\bbC}{\mathbb{C}}
\newcommand{\R}{\mathbb{R}}
\newcommand{\Rp}{{\mathbb{R}}^*_+}
\newcommand{\T}{\mathcal{T}}

\newcommand{\sqK}{\operatorname{K}\!\operatorname{K}}

\newcommand{\Z}{\mathbb{Z}}
\newcommand{\Zp}{{\mathbb{Z}}_+}
\def\AF{{{\operatorname{AF}}}}

\def\Tor{{{\operatorname{Tor}}^\Z_1}}
\def\Ext{{{\operatorname{Ext}}}}
\def\Exts{\operatorname{Ext}_{\operatorname{s}}}
\def\Extw{\operatorname{Ext}_{\operatorname{w}}}
\def\Ext{{{\operatorname{Ext}}}}

\def\OA{{{\mathcal{O}}_A}}
\def\ON{{{\mathcal{O}}_N}}
\def\OAT{{{\mathcal{O}}_{A^t}}}

\def\TA{{{\mathcal{T}}_A}}
\def\TN{{{\mathcal{T}}_N}}

\def\TAT{{{\mathcal{T}}_{A^t}}}

\def\TB{{{\mathcal{T}}_B}}
\def\TBT{{{\mathcal{T}}_{B^t}}}

\def\A{{\mathcal{A}}}
\def\B{{\mathcal{B}}}
\def\C{{\mathcal{C}}}
\def\OaA{{{\mathcal{O}}^a_A}}
\def\OB{{{\mathcal{O}}_B}}
\def\OTA{{{\mathcal{O}}_{\tilde{A}}}}
\def\F{{\mathcal{F}}}
\def\G{{\mathcal{G}}}
\def\FA{{{\mathcal{F}}_A}}
\def\PA{{{\mathcal{P}}_A}}
\def\bbC{{\mathbb{C}}}

 \def\U{{\mathcal{U}}}
\def\OF{{{\mathcal{O}}_F}}
\def\DF{{{\mathcal{D}}_F}}
\def\FB{{{\mathcal{F}}_B}}
\def\DA{{{\mathcal{D}}_A}}
\def\DB{{{\mathcal{D}}_B}}
\def\DZ{{{\mathcal{D}}_Z}}

\def\End{{{\operatorname{End}}}}

\def\Ext{{{\operatorname{Ext}}}}
\def\Hom{{{\operatorname{Hom}}}}

\def\Max{{{\operatorname{Max}}}}
\def\Max{{{\operatorname{Max}}}}
\def\max{{{\operatorname{max}}}}
\def\KMS{{{\operatorname{KMS}}}}
\def\Per{{{\operatorname{Per}}}}
\def\Out{{{\operatorname{Out}}}}
\def\Aut{{{\operatorname{Aut}}}}
\def\Ad{{{\operatorname{Ad}}}}
\def\Inn{{{\operatorname{Inn}}}}
\def\Int{{{\operatorname{Int}}}}
\def\det{{{\operatorname{det}}}}
\def\exp{{{\operatorname{exp}}}}
\def\nep{{{\operatorname{nep}}}}
\def\sgn{{{\operatorname{sign}}}}
\def\cobdy{{{\operatorname{cobdy}}}}
\def\Ker{{{\operatorname{Ker}}}}
\def\Coker{{{\operatorname{Coker}}}}
\def\Im{{\operatorname{Im}}}

\def\ind{{{\operatorname{ind}}}}
\def\Ind{{{\operatorname{Ind}}}}
\def\id{{{\operatorname{id}}}}
\def\supp{{{\operatorname{supp}}}}
\def\co{{{\operatorname{co}}}}
\def\scoe{{{\operatorname{scoe}}}}
\def\coe{{{\operatorname{coe}}}}
\def\I{{\mathcal{I}}}
\def\Span{{{\operatorname{Span}}}}
\def\event{{{\operatorname{event}}}}
\def\S{\mathcal{S}}

\def\coe{{{\operatorname{coe}}}}
\def\scoe{{{\operatorname{scoe}}}}
\def\uoe{{{\operatorname{uoe}}}}
\def\ucoe{{{\operatorname{ucoe}}}}
\def\event{{{\operatorname{event}}}}

\section{Introduction}

Study of homotopy groups of endomorphisms of Cuntz--Krieger algebras 
was initiated by J. Cuntz in \cite{Cuntz1984}.
He showed that the homotopy groups
$\pi_n(\End(\OA\otimes K(H)))$ of the endomorphisms of stabilized Cuntz--Krieger algebra
$\OA\otimes K(H)$ is isomorphic to the bi-variant $n$th weak extension group  $\Extw^n(\OA\otimes K(H),\OA\otimes K(H))$.
By using $\sqK$-theoretic machinery, M. Dadarlat investigated 
the homotopy groups of endomorphisms and automorphisms of Kirchberg algebras,
and showed that the $n$th homotopy group $\pi_n(\Aut(\A))$ of the automorphism group
$\Aut(\A)$ of a unital Kirchberg algebra $\A$ is isomorphic to the $\sqK$-group
$\sqK^{n+1}(C_\A, \A)$,
 where $C_\A$ is the mapping cone of the unital embedding 
$u_\A: \bbC \rightarrow \A$.   
Related to classification of continuous fields of $C^*$-algebras, 
Izumi--Sogabe \cite{IS} and Sogabe \cite{Sogabe2020} studied
the homotopy groups of automorphisms of Cuntz algebras 
and Cuntz--Toeplitz algebras (see also \cite{DP}).

In this paper, we will first present the groups
$\pi_i(\Aut(\OA)), i=1,2 $ for the Cuntz--Krieger algebra $\OA$
in terms of  
the four abelian groups
$K_i(\OA), \Exts^i(\OA), i=0,1$
by using a Dadarlat's general result \cite{Dadarlat2007} on the homotopy groups
of the automorphisms of Kirchberg algebras. 
The former $K_i(\OA)$ is the $\K$-groups 
and the latter $\Exts^i(\OA)$ is the strong extension groups
of $\OA$, where the first strong extension group $\Exts^1$ is the usual 
strong extension group $\Exts$ as in \cite{PS}, \cite{PP} and \cite{MaPre2021exts}.
Since 
$\K_i(\OA), \Exts^i(\OA), i=0,1$
are written in terms of the underlying matrix $A$
(\cite{Cuntz80}, \cite{MaPre2021exts}),
it is a easy task to compute  
the groups $\pi_i(\Aut(\OA)), i=1,2 $ by hand.

In \cite{Sogabe2022}, the second named author introduced the notion of reciprocality for
two unital Kirchberg algebras with finitely generated $\K$-groups.
Two unital Kirchberg algebras $\A, \B$ are said to be reciprocal if both
$\A \underset{\sqK}{\sim}D(C_\B)$ and
$\B \underset{\sqK}{\sim}D(C_\A)$
hold, 
where $\underset{\sqK}{\sim}$ means $\sqK$-equivalence, 
and 
the $C^*$-algebra  
$D(C_\A)$ is the Spanier--Whitehead $\K$-dual defined by Kaminker--Schochet
\cite{KS} (cf. \cite{KP}). 
The Spanier--Whitehead $\K$-duality is a noncommutative analogue of the classical 
 Spanier--Whitehead duality for finite CW complexes.
Related to the homotopy groups of automorphisms of unital Kirchberg algebras,
the second named author proved in \cite{Sogabe2022} that  
 $\pi_i(\Aut(\A))$ is isomorphic to $\pi_i(\Aut(\B))$ for $i=1,2$  
 if and only if $\A$ is isomorphic to $\B$, or $\A$ and  $\B$ are reciprocal.
 In the present paper, we will show that every pair $\OA$ and $\OB$ of 
 two Cuntz--Krieger algebras can never be reciprocal
 by computing the $\K$-groups of $D(C_\OA)$ and $D(C_\OB)$
  (Lemma \ref{lem:nonreciprocal}).
In particular, the groups $\pi_i(\Aut(\OA))$ and $\pi_i(\Aut(\OB))$ for $i=1,2$ are isomorphic 
 if and only if $\OA$ is isomorphic to $\OB$.   
Since we know that the group structure 
of the two groups
$\K_0(\OA), \Exts^1(\OA)$ determine the other two groups
 $\K_1(\OA), \Exts^0(\OA)$ (Lemma \ref{lem:K0Ext1=>K_1Ext0}),
 the homotopy groups 
  $\pi_i(\Aut(\A)), i=1,2$ 
 are determined by the only two groups $\K_0(\OA), \Exts^1(\OA)$. 
For an $N \times N$ matrix $A$ with entries in $\{0,1\}$,
let us denote by $\widehat{A}$ 
the  $N\times N$ matrix 
$ \widehat{A} = A + R_1 - AR_1$, 
where $R_1$ is the $N\times N$ matrix such that its first row is 
the vector $[1,\dots,1]$ whose entries are all $1$s
and the other rows are zero vectors.  
  As a result, we will prove the following theorem in the present paper.
\begin{theorem}[{Theorem \ref{thm:mainthm}}]
Let $A=[A(i,j)]_{i,j=1}^N, B=[B(i,j)]_{i,j=1}^M$ 
be irreducible non permutation matrices with entries in $\{0,1\}.$
Then the following four conditions are mutually equivalent.
 \begin{enumerate}
\renewcommand{\theenumi}{(\roman{enumi})}
\renewcommand{\labelenumi}{\textup{\theenumi}}
\item
$\pi_i(\Aut(\OA))\cong \pi_i(\Aut(\OB)), i=1,2$.
\item $\OA \cong \OB.$
\item
$\K_0(\OA)\cong \K_0(\OB)$ and $\Exts(\OA) \cong \Exts(\OB)$.
\item
$\Z^N/(I-A) \Z^N \cong \Z^M/(I-B) \Z^M$ and 
$\Z^N/(I-\widehat{A}) \Z^N \cong \Z^M/(I-\widehat{B}) \Z^M$.
\end{enumerate}
\end{theorem}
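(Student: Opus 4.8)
The plan is to prove the four conditions equivalent through the cycle (ii) $\Rightarrow$ (i) and (i) $\Rightarrow$ (ii), closed by (ii) $\Rightarrow$ (iii) $\Rightarrow$ (i), and then to recast (iii) as (iv) by a matrix computation. At the outset I would record that, because $A$ and $B$ are irreducible and not permutation matrices, $\OA$ and $\OB$ are unital Kirchberg algebras in the UCT class with finitely generated $\K$-groups, so that both Dadarlat's homotopy-group formula and the characterization of \cite{Sogabe2022} are available. Two implications are then immediate from functoriality: an isomorphism $\OA \cong \OB$ induces an isomorphism of topological groups $\Aut(\OA) \cong \Aut(\OB)$, hence (ii) $\Rightarrow$ (i); and it induces isomorphisms of the $\K$-groups and of the strong extension groups, hence (ii) $\Rightarrow$ (iii).

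For (i) $\Rightarrow$ (ii) I would invoke \cite{Sogabe2022}: an isomorphism $\pi_i(\Aut(\OA)) \cong \pi_i(\Aut(\OB))$ for $i=1,2$ forces $\OA$ and $\OB$ to be either isomorphic or reciprocal, and the reciprocal alternative is excluded by Lemma \ref{lem:nonreciprocal}, leaving $\OA \cong \OB$. For (iii) $\Rightarrow$ (i) I would appeal to the explicit description of $\pi_1(\Aut(\OA))$ and $\pi_2(\Aut(\OA))$ obtained earlier from Dadarlat's formula, in which these groups are expressed through $\K_0(\OA)$ and $\Exts^1(\OA)$ alone; this is legitimate because Lemma \ref{lem:K0Ext1=>K_1Ext0} recovers $\K_1(\OA)$ and $\Exts^0(\OA)$ from those two groups. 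Thus the abstract isomorphisms $\K_0(\OA) \cong \K_0(\OB)$ and $\Exts(\OA) \cong \Exts(\OB)$ of (iii) propagate to the homotopy groups, yielding (i). This closes the loop (i) $\Leftrightarrow$ (ii) $\Leftrightarrow$ (iii); note that routing through the homotopy groups in this way deliberately avoids a direct application of Kirchberg--Phillips to obtain (iii) $\Rightarrow$ (ii), and so sidesteps the need to match the position of the unit class $[1]$ inside $\K_0$.

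It remains to identify (iii) with (iv), which I would do by substituting the matrix descriptions of the invariants: $\K_0(\OA) \cong \Z^N/(I-A)\Z^N$ by \cite{Cuntz80}, and $\Exts(\OA) \cong \Z^N/(I-\widehat{A})\Z^N$ by \cite{MaPre2021exts}. Here the factorization $I - \widehat{A} = (I-A)(I-R_1)$ exhibits $\widehat{A}$ as the modification of $A$ that carries the unit data. With these identifications (iii) and (iv) become the same pair of statements. The hard part, I expect, will be the (iii) $\Rightarrow$ (i) step: one must be certain that the homotopy groups depend only on the \emph{abstract} isomorphism types of $\K_0(\OA)$ and $\Exts^1(\OA)$, and not on finer data such as the class of the unit or the extension class linking the two groups together. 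Guaranteeing this --- through the earlier Dadarlat-based computation together with Lemma \ref{lem:K0Ext1=>K_1Ext0} --- is exactly what turns the group-level data of (iii) into a complete invariant.
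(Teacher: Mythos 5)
Your proposal is correct and follows essentially the same route as the paper: (i) $\Leftrightarrow$ (ii) via Theorem \ref{thm:Sogabe} combined with Lemma \ref{lem:nonreciprocal}, (ii) $\Rightarrow$ (iii) by functoriality, (iii) $\Rightarrow$ (i) via Lemma \ref{lem:K0Ext1=>K_1Ext0} feeding into the formulas of Proposition \ref{prop:pi_1}, and (iii) $\Leftrightarrow$ (iv) by the matrix descriptions of $\K_0$ and $\Exts$. The concern you flag at the end is resolved exactly as you suggest, since Proposition \ref{prop:pi_1} expresses $\pi_1(\Aut(\OA))$ and $\pi_2(\Aut(\OA))$ purely through tensor and Tor constructions on the abstract isomorphism types of the four groups, with no reference to the unit class.
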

Since 
$\K_0(\OA)$ is isomorphic to  the weak extension group $\Extw(\OA)$,
the condition (iii) is replaced with
$$
\Extw(\OA)\cong \Extw(\OB) \quad \text{ and } \quad
\Exts(\OA) \cong \Exts(\OB).
$$
The homotopy groups of $\Aut(\OA)$ 
completely determine the isomorphism class of $\OA$,
and also the isomorphism class of $\OA$
is determined by the group structure of 
the two extension groups $\Extw(\OA)$ and $\Exts(\OA)$,
which are computed as the abelian groups 
$\Z^N /(I-{A})\Z^N$ and $\Z^N /(I- \widehat{A})\Z^N$  
that are the cokernels of the matrices $I- {A}$ by \cite{CK}
and $I- \widehat{A}$ by \cite{MaPre2021exts}, respectively.
Thus, our theorem implies that the pair of two extension groups has exactly the same information as the pair $(\K_0(\OA), [1_\OA]_0)$ due to \cite{Ro}.
The possible range of the pair $(\Extw(\OA), \Exts(\OA))$ is clarified in Corollary \ref{range},
and the relationship between $(\Extw(\OA), \Exts(\OA))$ and $(\K_0(\OA), [1_\OA]_0)$ is discussed in Proposition \ref{rel}.

Concerning the homotopy groups of the automorphism group of the stabilizations
$\OA\otimes K(H)$ together with Dadarlat's result in \cite{Dadarlat2007},
we will show that
$\pi_i(\Aut(\OA\otimes K(H)))\cong \pi_i(\Aut(\OB\otimes K(H))), i=1,2$
if and only if $\OA\otimes K(H) \cong \OB\otimes K(H)$
(Proposition \ref{prop:stabilization2}).

\section{Preliminary}
Throughout the paper, we mean by a Kirchberg algebra 
a separable unital nuclear simple purely infinite $C^*$-algebra.
We always assume that a Kirchberg algebra satisfies the Universal Coefficient Theorem (UCT). 
\subsection{Extension groups as $\sqK$-groups}
Let $\A$ be a unital Kirchberg algebra.
Let $u_\A: \bbC\rightarrow \A$ be the unital embedding
defined by $u_\A(c) = c 1_\A$ for $c \in \bbC$,
where $1_\A$ denotes the unit of $\A$.
The mapping cone $C_\A$ for the map $u_\A: \bbC\rightarrow \A$
is defined by the $C^*$-algebra 
\begin{equation*}
C_\A: = \{ f \in C_0(0,1]\otimes\A \mid f(1) \in \bbC 1_\A\}.
\end{equation*}
The suspension $S\A$ is the $C^*$-algebra $C_0(0,1)\otimes\A$
which is naturally embedded into $C_\A$.
We have then a short exact sequence 
\begin{equation}\label{eq:SACA}
0 \longrightarrow S\A \longrightarrow C_\A \longrightarrow \bbC \longrightarrow 0
\end{equation}
in a natural way.
For a separable unital nuclear $C^*$-algebras $\A, \B$, we write
\begin{align*}
\Exts^n(\A, \B) =  \sqK^{1-n}(C_\A, \B), \qquad 
\Extw^n(\A, \B) =  \sqK^{n}(\A, \B), \qquad n=0,1,
\end{align*}
where $\sqK^n(\,\,\, , \,\,\,) $ means the Kasparov $\sqK$-group of degree $n$
(\cite{Kasparov81}, cf. \cite{Blackadar}).
For separable UCT C*-algebras $\A, \B$,
one can compute the KK-groups by the following short exact sequence, called UCT, splitting unnaturally (see \cite{Blackadar, Brown84, RS}):
\[0\to\bigoplus_{i=0, 1}\Ext_\Z^1(\K_i(\A), \K_{i+1}(\B))\to KK(\A, \B)\to \bigoplus_{i=0, 1}\Hom (\K_i(\A), \K_i(\B))\to 0.\]

We in particular write 
\begin{align*}
\Exts^n(\A) =  \Exts^n(\A, \bbC), \qquad 
\Extw^n(\A) =  \Extw^{n}(\A, \bbC), \qquad n=0,1.
\end{align*}
The $\K$-homology groups $\K^n(\A), n=0,1$ are defined by $\sqK^n(\A,\bbC)$,
so that 
\begin{align*}
\Exts^n(\A) =  \K^{1-n}(C_\A), \qquad 
\Extw^n(\A) =  \K^{n}(\A), \qquad n=0,1
\end{align*}
(see \cite{Blackadar}, \cite{HR}, \cite{Skandalis}  for detail accounts on the relation of 
Ext-groups and $\sqK$-theory).  
The extension groups $\Exts^n(\A), \Extw^n(\A)$ were primary defined and studied 
to investigate extensions of $C^*$-algebras motivated 
by classification of essentially normal 
operators on Hilbert spaces (cf. \cite{BDF}, \cite{Douglas}, etc.).
Let $B(H)$ denote the $C^*$-algebra of bounded linear operators 
on a separable infinite dimensional Hilbert space $H$.
The quotient $C^*$-algebra $Q(H)$ of $B(H)$ by 
the $C^*$-subalgebra $K(H)$ of compact operators on $H$ is called the Calkin algebra.   
For a separable unital nuclear $C^*$-algebra $\A$,
a $*$-homomorphism $\tau: \A\rightarrow Q(H)$ is called an extension of $\A$.
The extension is said to be unital (resp. essential) if $\tau$ is unital (resp. injective).
Two extensions  $\tau_1, \tau_2: \A\rightarrow Q(H)$
are said to be strongly (resp. weakly) equivalent if there exists a unitary 
$U \in B(H)$ (resp. $u \in Q(H))$ 
such that 
$\tau_2(a) = \pi(U)\tau_1(a) \pi(U)^*, a \in \A$
(resp. $\tau_2(a) = u\tau_1(a)u^*, a \in \A$).
Let us denote by 
$\Exts(\A)$ (resp. $\Extw(\A)$) the set of strong (resp. weak) equivalence classes of  unital essential extensions
of $\A$.
It is well-known that both $\Exts(\A)$ and $\Extw(\A)$
 become abelian groups whose addition is defined 
 by direct sum of extensions (cf. \cite{BDF}, \cite{Douglas},
 \cite{HR}, etc.).
It is also well-known that
$\Exts(\A)$ and $\Extw(\A)$ are isomorphic to $\Exts^1(\A)$ and $\Extw^1(\A)$
as abelian groups, respectively (cf. \cite[2.4 Corollary]{Skandalis}, \cite[Theorem 4.5]{PennigSogabe}).  
By a general theory of $\sqK$-theory, the short exact sequence \eqref{eq:SACA}
of $C^*$-algebras yield the following cyclic six term exact sequence (see \cite{MaPre2021exts, Skandalis})
\begin{equation} \label{eq:Extcyclicexact}
0\to \Exts^0(\A)\to\Extw^0(\A)\to\Z\to\Exts^1(\A)\to\Extw^1(\A)\to 0.
\end{equation}
\subsection{The cyclic exact sequence for Cuntz--Krieger algebras}
In what follows, we denote by $A$ an $N\times N$ irreducible non permutation matrix 
$[A(i,j)]_{i,j=1}^N$ with entries in $\{0,1\}.$
The Cuntz--Krieger algebra $\OA$ is defined to be the universal $C^*$-algebra 
generated by $N$-partial isometries $S_1, \dots, S_N$ subject to
 the operator relations:
\begin{equation*}
S_i^* S_i = \sum_{j=1}^n A(i,j) S_j S_j^*, \quad i=1,\dots, N, \qquad \sum_{j=1}^N S_j S_j^* =1. 
\end{equation*}  
Let $\hat{\iota}_A: \Z \rightarrow \Z^N/ (I-\widehat{A})\Z^N$
be the homomorphism of abelian groups defined by
$\hat{\iota}_A(m)= [(I-A)[k_i]_{i=1}^N]$ where $\sum_{i=1}^N k_i =m$ with $k_i \in \Z$ (see \cite{MaPre2021exts}).
Since $I-\widehat{A}=(I-A)(I-R_1)$, the map $\hat{\iota}_A$ is well-defined.
Let us denote by $\Ker(I-A), \Ker(I - \widehat{A})$ 
the subgroups of $\Z^N$ defined by the kernels of the homomorphisms
$I - A: \Z^N \rightarrow \Z^N, \, I - \widehat{A}: \Z^N \rightarrow \Z^N,$
respectively.
In \cite{MaJMAA2024}, the cyclic six term exact sequence \eqref{eq:Extcyclicexact} for $\A=\OA$
is computed  as follows
\begin{equation}\label{eq:6termA}
0\to \Ker(I-\widehat{A})/i_1(\Z)\xrightarrow{j_A}\Ker(I-A)\xrightarrow{s_A}\Z
\xrightarrow{\hat{\iota}_A}\Z^N/(I-\widehat{A})\Z^N\xrightarrow{\hat{q}_A}\Z^N/(I-A)\Z^N\to 0.
\end{equation}
where $i_1:\Z\rightarrow \Ker(I-\widehat{A})$ and the two maps $s_A, j_A$ are defined by 
\[i_1(m) :=
\left[
\begin{array}{c}
m\\
0\\
\vdots\\
0
\end{array}
\right]
,\quad s_A(
[l_i]_{i=1}^N ) 
:=
\sum_{i=1}^N l_i, \]

\[j_A(
[l_i]_{i=1}^N+i_1(\mathbb{Z}))
:=(I-R_1)([l_i]_{i=1}^N)
=
\left[
\begin{array}{c}
- \sum_{i=2}^N l_i\\
 l_2\\
\vdots\\
l_N
\end{array}
\right],\] 
respectively, and 
$\hat{q}_A: \Z^N/ (I-\widehat{A})\Z^N\rightarrow \Z^N/ (I-{A})\Z^N$
denotes the natural quotient map.

Let us consider the Toeplitz algebra $\TA$ for the matrix $A$, 
which is defined by 
 the universal $C^*$-algebra 
generated by $N$-partial isometries $T_1, \dots, T_N$ and one non zero projection $P_0$ 
subject to
 the operator relations (see \cite{EFW, EV}):
\begin{equation*}
T_i^* T_i = \sum_{j=1}^n A(i,j) T_j T_j^* + P_0, \quad i=1,\dots, N, \qquad \sum_{j=1}^N T_j T_j^*  + P_0=1. 
\end{equation*}  
 The correspondence $T_i \rightarrow S_i, i=1,\dots,N$
 yields a short exact sequence
 \begin{equation}\label{eq:toeplitzext}
 0 \longrightarrow K(H) 
  \longrightarrow \TA 
   \longrightarrow \OA 
   \longrightarrow 0 
 \end{equation}
 called the Toeplitz extension of $\OA$.
 The short exact sequence \eqref{eq:toeplitzext} for the transposed matrix $A^t$
 yields the 
 cyclic six term exact sequence 
\begin{equation}\label{eq:6termTATOA}
0\to\K_1(\TAT)\to \K_1(\OAT)\to \Z\to \K_0(\TAT)\to \K_0(\OAT)\to 0
\end{equation}
 which is nothing but the cyclic six term exact sequence \eqref{eq:6termA}
 (\cite[Proposition 4.6]{MaJMAA2024}).
 We summarize the extension groups and $\K$-groups of Cuntz--Krieger algebras
 in the following way.
 \begin{lemma}[{\cite{Cuntz80}, \cite{CK}, \cite{MaJMAA2024}, \cite{MaPre2021exts}}]
\label{lem:summary}
Let $A$ be an $N\times N$ irreducible non permutation matrices with entries in $\{0,1\}.$ 
Then we have
\begin{align*}
\Extw^1(\OA) = & \Extw(\OA) =   \Z^N/ (I-A)\Z^N  = \K_0(\OAT)\cong \K_0(\OA),\\
\Exts^1(\OA) = & \Exts(\OA) =  \Z^N/ (I-\widehat{A})\Z^N  = \K_0(\TAT),\\
\Extw^0(\OA) = & \Ker(I - A) =   \K_1(\OAT)\cong \K_1(\OA),\\
\Exts^0(\OA) = & \Ker(I - \widehat{A})/ i_1(\Z)
= \Ker(s_A: \Ker(I - A) \rightarrow \Z) =  \K_1(\TAT).
\end{align*}
\end{lemma}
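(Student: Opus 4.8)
The plan is to assemble the four displayed lines by anchoring two of the entries through direct, already-published computations and then propagating the remaining entries through the two cyclic six term exact sequences \eqref{eq:Extcyclicexact} and \eqref{eq:6termA}.

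First I would treat the weak extension groups. The identity $\Extw^1(\OA)=\Extw(\OA)$ holds as abelian groups by the general fact recorded above (\cite[2.4 Corollary]{Skandalis}), and Cuntz's computation of the Brown--Douglas--Fillmore $\Ext$-group in \cite{Cuntz80}, together with \cite{CK}, gives the concrete isomorphism $\Extw(\OA)\cong\Z^N/(I-A)\Z^N$. Since the $\K_0$-group of a Cuntz--Krieger algebra is the cokernel of $I$ minus the transpose of its defining matrix, the group $\Z^N/(I-A)\Z^N$ equals $\K_0(\OAT)$; and because a matrix and its transpose share a Smith normal form, $\Coker(I-A)\cong\Coker(I-A^t)$, whence $\Extw(\OA)\cong\K_0(\OA)$. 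The degree-zero line $\Extw^0(\OA)=\Ker(I-A)=\K_1(\OAT)\cong\K_1(\OA)$ is handled the same way, using that $\Ker(I-A)$ and $\Ker(I-A^t)$ are free of equal rank $N-\rank(I-A)$.

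Next I would pin down the strong extension group $\Exts^1(\OA)=\Exts(\OA)$ via the main theorem of \cite{MaPre2021exts}, which identifies it with $\Z^N/(I-\widehat{A})\Z^N$. With the two anchors $\Extw^1(\OA)=\Coker(I-A)$ and $\Exts^1(\OA)=\Coker(I-\widehat{A})$ in hand, I would then invoke the computation of \cite{MaJMAA2024} that realizes the abstract cyclic sequence \eqref{eq:Extcyclicexact} for $\A=\OA$ as the concrete sequence \eqref{eq:6termA}. Matching the two sequences term by term — the two middle copies of $\Z$ agree, and the two anchors identify the outer pairs — forces the remaining term to be $\Exts^0(\OA)=\Ker(I-\widehat{A})/i_1(\Z)$; exactness of \eqref{eq:6termA} at $\Ker(I-A)$, where $j_A$ is injective, simultaneously identifies this group with $\Ker(s_A)$ via $j_A$.

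Finally, to obtain the Toeplitz descriptions I would use \cite[Proposition 4.6]{MaJMAA2024}, which asserts that the sequence \eqref{eq:6termA} coincides with the six term sequence \eqref{eq:6termTATOA} arising from the Toeplitz extension \eqref{eq:toeplitzext} for the transposed matrix $A^t$. Reading off the four positions of \eqref{eq:6termTATOA} then yields $\Extw^1(\OA)=\K_0(\OAT)$, $\Exts^1(\OA)=\K_0(\TAT)$, $\Extw^0(\OA)=\K_1(\OAT)$, and $\Exts^0(\OA)=\K_1(\TAT)$. I expect the main obstacle to be bookkeeping rather than anything conceptual: every identity hinges on keeping the transpose $A\mapsto A^t$ straight (the $\Ext$-functor is contravariant while $\K_0$ is covariant, so the Brown--Douglas--Fillmore group is $\Coker(I-A)$ whereas $\K_0(\OA)=\Coker(I-A^t)$), and on checking that the naturally occurring maps $i_1,s_A,j_A,\hat{q}_A$ agree with the index and exponential maps of the respective six term sequences — a verification that is precisely the content of the cited results \cite{MaJMAA2024,MaPre2021exts}.
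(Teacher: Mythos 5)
Your proposal is correct and takes essentially the same route as the paper, which offers no independent argument for this lemma but simply assembles the cited computations --- \cite{Cuntz80}, \cite{CK} for $\Extw^i(\OA)$, \cite{MaPre2021exts} for $\Exts(\OA)\cong\Z^N/(I-\widehat{A})\Z^N$, and \cite[Proposition 4.6]{MaJMAA2024} identifying \eqref{eq:6termA} with \eqref{eq:6termTATOA} --- together with the same Smith normal form observation you use for the transpose-invariance of $\Coker(I-A)$ and $\Ker(I-A)$. Your additional point that exactness of \eqref{eq:6termA} at $\Ker(I-A)$, with $j_A$ injective, identifies $\Ker(I-\widehat{A})/i_1(\Z)$ with $\Ker(s_A)$ is precisely the identification the paper imports from \cite{MaJMAA2024}.
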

We note that the Smith normal forms for the $\Z$-module maps $I-A, I-A^t$  show that $\Z^N/(I-A)\Z^N\cong \Z^N/(I-A^t)\Z^N=K_0(\OA)$
and $\Ker(I-A)\cong \Ker(I-A^t)=K_1(\OA)$.
\section{Formulas of $\pi_i(\Aut(\OA))$} 
Let us denote by $\Aut(\A)$ the group of automorphisms of a $C^*$-algebra $\A$.
It has a topology defined by pointwise norm convergence.  
The $n$th homotopy group of $\Aut(\A)$ is denoted by $\pi_n(\Aut(\A)), n=1,2.\dots$
The following theorem is due to M. Dadarlat \cite{Dadarlat2007}.
\begin{theorem}[{Dadarlat \cite[Corollary 5.10]{Dadarlat2007}}] \label{thm:Dadarlat}
For a unital Kirchberg algebra $\A$ and 
$ n=1,2,\dots
$,
we have the following formulas 
\[\pi_n(\Aut(\A)) \cong KK^{n+1}(C_\A, \A)=\Exts^n(\A, \A),
\]
\[\pi_n(\Aut(\A\otimes K(H))) \cong KK^{n}(\A, \A)=\Extw^n(\A, \A).
\]
\end{theorem}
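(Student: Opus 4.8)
The plan is to compute these homotopy groups by realizing $\Aut(\A)$, up to weak homotopy equivalence, inside a Kasparov mapping space and then invoking the parametrized form of the Kirchberg--Phillips classification. First I would reinterpret a class in $\pi_n(\Aut(\A))$, represented by a based map $f\colon S^n \to \Aut(\A)$, as a $C(S^n)$-linear automorphism $\alpha_f$ of the trivial field $C(S^n)\otimes\A$ given fibrewise by $(\alpha_f g)(x) = f(x)(g(x))$; the basepoint condition $f(\ast)=\id$ says exactly that $\alpha_f$ restricts to the identity on the fibre over $\ast$. This assignment is a bijection between based homotopy classes of such maps and $C(S^n)$-linear homotopy classes of basepoint-trivial automorphisms of $C(S^n)\otimes\A$, and the identical reduction applied to $\A\otimes K(H)$ handles the stable statement.

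The second step is the classification input. For unital Kirchberg algebras, the continuous-field classification of \cite{Dadarlat2007} states that $C(X)$-linear $\ast$-homomorphisms between trivial fields are classified up to $C(X)$-linear homotopy by their class in the representable Kasparov group $RKK(X;\,C(X)\otimes\A,\,C(X)\otimes\B)$, with the invertible classes realized precisely by $C(X)$-linear automorphisms. Taking $X=S^n$ and passing to the reduced (basepoint-trivial) summand, the group of based homotopy classes of basepoint-trivial fibrewise automorphisms becomes the group of invertibles in the reduced representable group. Since $n\geq 1$, only the identity component is seen by $\pi_n$, so the invertibility constraint is harmless and, up to the mod-$2$ periodicity of Kasparov theory, the reduced representable group collapses to ordinary $KK$: for $\A\otimes K(H)$ one obtains $KK^n(\A,\A)=\Extw^n(\A,\A)$, which is the second formula.

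For the unital formula I would track the constraint imposed by the unit. A $C(S^n)$-linear automorphism of $C(S^n)\otimes\A$ is automatically unital and, being trivial over $\ast$, it is compatible with the unital structure map $\id_{C(S^n)}\otimes u_\A$. Encoding this compatibility through the mapping cone sequence \eqref{eq:SACA} replaces the controlling object $\A$ by the cone $C_\A$ and shifts the degree by one, so that the reduced representable computation now returns $KK^{n+1}(C_\A,\A)=\Exts^n(\A,\A)$. Equivalently, one may first prove the stable formula and then compare unital with stable automorphisms by a fibration whose long exact sequence is exactly the six-term sequence obtained by applying $KK^\ast(-,\A)$ to \eqref{eq:SACA}; the appearance of $C_\A$ and the degree shift are then the bookkeeping of the connecting map, which records how an automorphism acts on the class of the unit.

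The step I expect to be the main obstacle is the classification input itself: showing that the natural map from the topological group $\Aut(\A)$ to the reduced, parametrized $KK$-invariant is a weak homotopy equivalence onto the relevant components. This demands both a realization statement (every invertible parametrized class is implemented by a fibrewise automorphism) and a uniqueness statement (fibrewise automorphisms with equal class are $C(S^n)$-linearly homotopic), and, crucially, that both hold compatibly with basepoints and suspensions so that they assemble into an isomorphism of all higher homotopy groups rather than merely of $\pi_0$. Keeping the unital constraint under uniform control in the parameter $x\in S^n$---so that the passage to $C_\A$ is natural and the one-step degree shift is correct---is the delicate point; the remaining manipulations are the formal algebra of representable $KK$ and of the mapping cone exact sequence \eqref{eq:SACA}.
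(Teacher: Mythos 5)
The paper contains no proof of this statement to compare against: Theorem \ref{thm:Dadarlat} is imported as a black box from Dadarlat \cite[Corollary 5.10]{Dadarlat2007}, and the authors only cite it. Judged against the cited source, your outline is essentially a faithful reconstruction of Dadarlat's actual strategy: identifying a based map $S^n\to\Aut(\A)$ with a $C(S^n)$-linear automorphism of $C(S^n)\otimes\A$ trivial over the basepoint, classifying $C(X)$-linear morphisms between trivial fields by (representable) Kasparov theory --- for trivial fields $RKK(X;C(X)\otimes\A,C(X)\otimes\B)\cong \sqK(\A,C(X)\otimes\B)$, which is how the reduction to $KK^n(\A,\A)$ is carried out there --- and handling the unit by passing to the mapping cone $C_\A$. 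So you have correctly guessed the route; what you have not supplied is the route's content.

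Two steps in your sketch deserve flagging, because they are precisely where the work lies in \cite{Dadarlat2007}. First, the assertion that ``only the identity component is seen by $\pi_n$, so the invertibility constraint is harmless'' conceals a genuine issue: the identity component of the endomorphism space contains non-invertible endomorphisms, so the claim that $\Aut\hookrightarrow\End$ (or into the KK-invertible classes) induces isomorphisms on $\pi_n$ requires the parametrized uniqueness theorem (homotopies through endomorphisms must be traded for homotopies through automorphisms at the level of $\pi_n$); this is not formal. Second, your first explanation of the unital formula (``compatible with the unital structure map \dots replaces $\A$ by $C_\A$ and shifts the degree'') is not an argument as stated; your alternative --- a fibration-type comparison of $\Aut(\A)$ with $\Aut(\A\otimes K(H))$ whose long exact sequence is $\sqK^*(-,\A)$ applied to \eqref{eq:SACA}, using $\sqK^{n+1}(S\A,\A)\cong \sqK^n(\A,\A)$ and $\sqK^{n+1}(\bbC,\A)\cong \K_{n+1}(\A)\cong\pi_n(U(\A))$ by K-stability of Kirchberg algebras --- is the correct bookkeeping and matches how the unital case is extracted from the stable one. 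In short: the proposal is an accurate roadmap of the external proof, with the technical core (Kirchberg--Phillips existence and uniqueness with parameters, compatibly with basepoints and suspension) deferred exactly where you say it is; since the paper itself treats the theorem as a citation, nothing more is required here, but the sketch should not be mistaken for a self-contained proof.
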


\subsection{Formulas of $\pi_i(\Aut(\OA))$}

\begin{lemma}\label{lem:Kunneth}
Let $\A$ be a separable unital nuclear $C^*$-algebra with finitely generated $\K$-groups. 
We have short exact sequences of abelian groups:
\begin{align*}
0& \longrightarrow 
  (\Exts^1(\A)\otimes \K_0(\A)) \, \oplus  \, (\Exts^0(\A)\otimes \K_1(\A)) \\
& \longrightarrow \pi_1(\Aut(\A)) \\
& \longrightarrow 
\Tor(\Exts^1(\A), \K_1(\A)) \, \oplus  \, \Tor(\Exts^0(\A), \K_0(\A)) 
\longrightarrow 0, \\
\intertext{ and }
0& \longrightarrow 
  (\Exts^1(\A)\otimes \K_1(\A)) \, \oplus  \, (\Exts^0(\A)\otimes \K_0(\A)) \\
& \longrightarrow \pi_2(\Aut(\A)) \\
& \longrightarrow 
\Tor(\Exts^1(\A), \K_0(\A)) \, \oplus  \, \Tor(\Exts^0(\A), \K_1(\A)) 
\longrightarrow 0.
\end{align*}
\end{lemma}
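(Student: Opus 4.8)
The plan is to rewrite the homotopy groups as Kasparov groups, convert the first variable into a Spanier–Whitehead dual, and then invoke the Künneth theorem for the $\K$-theory of a tensor product. First I would apply Dadarlat's Theorem \ref{thm:Dadarlat} to obtain
\[
\pi_n(\Aut(\A)) \cong \Exts^n(\A,\A) = \sqK^{1-n}(C_\A,\A), \qquad n=1,2.
\]
Since $\sqK^{1-n}$ is two-periodic in $n$, this reduces the problem to computing the two groups $\sqK^{0}(C_\A,\A)$, which controls $\pi_1$, and $\sqK^{1}(C_\A,\A)$, which controls $\pi_2$.

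Next I would use the Spanier–Whitehead $\K$-duality of Kaminker–Schochet \cite{KS}. The exact sequence \eqref{eq:SACA} exhibits $C_\A$ as an extension of $\bbC$ by $S\A$, so $C_\A$ lies in the bootstrap class and, because $\K_*(\A)$ is finitely generated, its $\K$-homology $\K^*(C_\A)$ is finitely generated as well; hence the dual $D(C_\A)$ exists. Its $\K$-groups are the $\K$-homology of $C_\A$, which by the identity $\Exts^n(\A)=\K^{1-n}(C_\A)$ give
\[
\K_0(D(C_\A)) \cong \Exts^1(\A), \qquad \K_1(D(C_\A)) \cong \Exts^0(\A).
\]
Duality also supplies natural isomorphisms $\sqK^i(C_\A,\A)\cong \sqK^i(\bbC, D(C_\A)\otimes\A)\cong \K_i(D(C_\A)\otimes\A)$, with the grading read modulo $2$.

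I would then apply Schochet's Künneth theorem to the tensor product $D(C_\A)\otimes\A$, which is legitimate since $D(C_\A)$ lies in the bootstrap class: for $n=0,1$ there is a short exact sequence
\[
0 \to \bigoplus_{i+j\equiv n} \K_i(D(C_\A))\otimes\K_j(\A) \to \K_n(D(C_\A)\otimes\A) \to \bigoplus_{i+j\equiv n+1}\Tor(\K_i(D(C_\A)),\K_j(\A)) \to 0,
\]
with all indices taken modulo $2$. Substituting $\K_0(D(C_\A))=\Exts^1(\A)$ and $\K_1(D(C_\A))=\Exts^0(\A)$ and setting $n=0$ reproduces the first displayed sequence for $\pi_1(\Aut(\A))$, while $n=1$ reproduces the second one for $\pi_2(\Aut(\A))$.

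The step I expect to be the main obstacle is not any single deep input but the consistent bookkeeping of the $\Z/2$-grading across the three isomorphisms—Dadarlat's degree shift, the duality isomorphism, and the Künneth sequence—so that the $\otimes$-summands and the $\Tor$-summands land in exactly the pairings displayed in the statement. A secondary point that needs care is the justification that $C_\A$ is dualizable: one must check that finite generation of $\K_*(\A)$, hence of $\K^*(C_\A)$, together with membership in the bootstrap class, is enough to invoke both Kaminker–Schochet duality and Schochet's Künneth theorem for the algebra $D(C_\A)\otimes\A$.
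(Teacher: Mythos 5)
Your proof is correct, and the grading bookkeeping you worried about does check out: for $n=1$ you land in $\sqK^{0}(C_\A,\A)\cong\K_0(D(C_\A)\otimes\A)$ and for $n=2$ in $\sqK^{1}(C_\A,\A)\cong\K_1(D(C_\A)\otimes\A)$, and with $\K_0(D(C_\A))\cong\Exts^1(\A)$, $\K_1(D(C_\A))\cong\Exts^0(\A)$ the $\otimes$- and $\Tor$-summands come out exactly as displayed. However, your route is genuinely different from the paper's. The paper applies the K\"unneth theorem for Kasparov groups of Rosenberg--Schochet (\cite[Theorem 23.1.2]{Blackadar}, \cite{RS}) \emph{directly} to $\pi_1(\Aut(\A))\cong\sqK(C_\A,\A)$: since $\K_*(C_\A)$ is finitely generated, that theorem yields a short exact sequence with $\otimes$-part $(\K^0(C_\A)\otimes\K_0(\A))\oplus(\K^1(C_\A)\otimes\K_1(\A))$ and degree-shifted $\Tor$-part, and substituting $\Exts^i(\A)=\K^{i+1}(C_\A)$ finishes; no dual object is ever constructed. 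You instead interpose Spanier--Whitehead duality and then invoke the classical K-theory K\"unneth formula for $D(C_\A)\otimes\A$. This is legitimate, but note that your duality step needs more than the defining property $\sqK^i(\C,\bbC)\cong\sqK^i(\bbC,D(\C))$: you need duality with coefficients, $\sqK^i(C_\A,\A)\cong\K_i(D(C_\A)\otimes\A)$, which holds because the Kaminker--Schochet duality \cite{KS} is implemented by duality classes and is natural in the second variable --- indeed this is exactly the isomorphism the paper itself invokes at \eqref{eq:piAutKKDC}. What your route buys is that identification $\pi_i(\Aut(\A))\cong\K_{i+1}(D(C_\A)\otimes\A)$, which the paper needs later anyway for the reciprocality argument in Section 4; what the paper's route buys is economy and slightly lighter hypotheses: one theorem, no dualizability discussion (your verification via \eqref{eq:SACA} that $\K^*(C_\A)$ is finitely generated is then not needed as a separate step, since the KK-K\"unneth only asks that $C_\A$ lie in the bootstrap class with finitely generated K-theory). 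One caveat common to both arguments: the hypothesis ``separable unital nuclear'' in the statement is not literally sufficient, since Dadarlat's Theorem \ref{thm:Dadarlat} --- which both you and the paper use as the first step --- requires $\A$ to be a unital Kirchberg algebra satisfying the UCT, as the paper assumes throughout.
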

\begin{proof}
Since
$\pi_1(\Aut(\A)) \cong \sqK(C_\A, \A)$,
the K{\"u}nneth theorem (see \cite[Theorem 23.1.2]{Blackadar}, \cite{RS}) tells us that
there exists a short exact sequence
\begin{align*}
0& \longrightarrow 
  (\K^0(C_\A)\otimes \K_0(\A)) \, \oplus  \, (\K^1(C_\A)\otimes \K_1(\A)) \\
& \longrightarrow \pi_1(\Aut(\A)) \\
& \longrightarrow 
\Tor(\K^0(C_\A), \K_1(\A)) \, \oplus  \, \Tor(\K^1(C_\A), \K_0(\A)) 
\longrightarrow 0.
\end{align*}
As $\Exts^i(\A) = \K^{i+1}(C_\A), i=0,1$,
we have the desired short exact sequence.

The exact sequence for $\pi_2(\Aut(\A))$ is proved smilarly.
\end{proof}
Let $A=[A(i,j)]_{i,j=1}^N $ be an irreducible $N\times N$ matrix with entries in $\{0,1\}$.
 By applying Lemma \ref{lem:Kunneth} to the Cuntz--Krieger algebra $\OA$, 
 we have the following formulas.
\begin{proposition}\label{prop:pi_1}
\hspace{6cm}
\begin{align*}
\pi_1(\Aut(\OA)) 
\cong &  
(\Exts^1(\OA)\otimes \K_0(\OA)) \, \oplus  \, (\Exts^0(\OA)\otimes \K_1(\OA)), 
\quad\\
\pi_2(\Aut(\OA)) 
\cong & 
(\Exts^1(\OA)\otimes \K_1(\OA)) \, \oplus  \, (\Exts^0(\OA)\otimes \K_0(\OA)) \, 
\, \oplus  \Tor(\Exts^1(\A), \K_0(\A)).
\end{align*}
\end{proposition}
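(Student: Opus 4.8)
The plan is to feed the two short exact sequences of Lemma \ref{lem:Kunneth} with $\A=\OA$ and then to determine which of the four $\Tor$-terms survive, using the explicit descriptions collected in Lemma \ref{lem:summary}. The decisive observation is that two of the groups entering these sequences are torsion-free. Indeed $\K_1(\OA)\cong\K_1(\OAT)=\Ker(I-A)$ is a subgroup of the free abelian group $\Z^N$, hence free; and $\Exts^0(\OA)=\Ker(s_A\colon\Ker(I-A)\to\Z)$ is a subgroup of $\Ker(I-A)$, hence also free. By contrast $\K_0(\OA)=\Z^N/(I-A)\Z^N$ and $\Exts^1(\OA)=\Z^N/(I-\widehat{A})\Z^N$ are finitely generated and may carry torsion.

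First I would recall that for finitely generated abelian groups $\Tor(G,H)$ depends only on the torsion subgroups, so it vanishes as soon as one of $G$, $H$ is torsion-free. Applied to the $\pi_1$-sequence, the term $\Tor(\Exts^1(\OA),\K_1(\OA))$ vanishes because $\K_1(\OA)$ is free, and $\Tor(\Exts^0(\OA),\K_0(\OA))$ vanishes because $\Exts^0(\OA)$ is free. Hence the quotient term of the first exact sequence is zero, and the sequence collapses directly to the claimed isomorphism $\pi_1(\Aut(\OA))\cong(\Exts^1(\OA)\otimes\K_0(\OA))\oplus(\Exts^0(\OA)\otimes\K_1(\OA))$, with no splitting argument required.

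For $\pi_2$ the same vanishing principle kills $\Tor(\Exts^0(\OA),\K_1(\OA))$ (both factors free), but leaves $\Tor(\Exts^1(\OA),\K_0(\OA))$, which is genuinely present. The second exact sequence therefore reduces to
\[0\to(\Exts^1(\OA)\otimes\K_1(\OA))\oplus(\Exts^0(\OA)\otimes\K_0(\OA))\to\pi_2(\Aut(\OA))\to\Tor(\Exts^1(\OA),\K_0(\OA))\to 0.\]
To reach the stated direct-sum formula I would invoke the fact that the K{\"u}nneth short exact sequence splits unnaturally, which is part of the K{\"u}nneth theorem already cited in the proof of Lemma \ref{lem:Kunneth}; this splitting gives $\pi_2(\Aut(\OA))\cong(\Exts^1(\OA)\otimes\K_1(\OA))\oplus(\Exts^0(\OA)\otimes\K_0(\OA))\oplus\Tor(\Exts^1(\OA),\K_0(\OA))$.

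The main obstacle is not analytic but bookkeeping: one must match $\Exts^1(\OA)=\K^0(C_{\OA})$ and $\Exts^0(\OA)=\K^1(C_{\OA})$ correctly against $\K_0(\OA)$ and $\K_1(\OA)$ in each slot, so that precisely the three $\Tor$-terms involving a free factor cancel, while the single surviving term $\Tor(\Exts^1(\OA),\K_0(\OA))$ lands in $\pi_2$ and not in $\pi_1$. One should also note that this asymmetry is exactly why the $\pi_1$-sequence needs no splitting whereas the $\pi_2$-sequence does; everything else is the vanishing of $\Tor$ against a free group together with the standard unnatural splitting of K{\"u}nneth.
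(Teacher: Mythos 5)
Your proposal is correct and follows essentially the same route as the paper: the paper's proof likewise applies Lemma \ref{lem:Kunneth} and kills the relevant $\Tor$-terms by observing that $\K_1(\OA)=\Ker(I-A^t)$ and $\Exts^0(\OA)=\Ker(s_A)$ are torsion free, treating $\pi_2$ ``similarly.'' The only difference is that you make explicit the unnatural splitting of the K{\"u}nneth sequence needed to detach the surviving term $\Tor(\Exts^1(\OA),\K_0(\OA))$ in the $\pi_2$-formula, a step the paper leaves implicit.
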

\begin{proof}
Since
$\K_1(\OA) = \Ker(I- A^t)$ and
$\Exts^0(\OA) = \Ker(s_A: \Ker(I-A) \rightarrow \Z)$ are torsion free,
one has
$$
\Tor(\Exts^1(\OA), \K_1(\OA)) = \Tor(\Exts^0(\OA), \K_0(\OA)) = 0,
$$ 
and this shows the desired formula for $\pi_1(\Aut(\OA))$ by Lemma \ref{lem:Kunneth}.
The desired formula for $\pi_2(\Aut(\OA))$ 
is proved similarly. 
\end{proof}
For an $N \times N$ matrix $A$, 
let us define an $(N+1) \times N$ matrix  $A_T$ by
$$
A_T =
\begin{bmatrix}
1 & \cdots & 1 \\
  & I-A    &
\end{bmatrix}.
$$
It is direct to see that 
$\Ker(A_T: \Z^N \rightarrow \Z^{N+1}) = \Ker(s_A: \Ker(I-A)\rightarrow \Z)$
(cf. \cite[Lemma 4.2]{MaJMAA2024}). 
\begin{corollary}\label{cor:pi_1}
\hspace{6cm}
\begin{align*}
\pi_1(\Aut(\OA)) 
\cong &  
(\Z^N/ (I - \widehat{A}) \Z^N \otimes \Z^N/ (I-A^t) \Z^N)  
\, \oplus \,  
(\Ker(A_T: \Z^N \rightarrow \Z^{N+1}) \otimes \Ker(I-A^t)),
\\
\pi_2(\Aut(\OA)) 
\cong & 
(\Z^N/ (I - \widehat{A}) \Z^N \otimes \Ker(I-A^t)) \,
 \oplus \, (\Ker(A_T: \Z^N \rightarrow \Z^{N+1}) \otimes \Z^N/ (I-A^t) \Z^N)   \\
 &\, \oplus  \, 
\Tor(\Z^N/ (I - \widehat{A}) \Z^N, \Z^N/ (I-A^t) \Z^N). 
\end{align*}
\end{corollary}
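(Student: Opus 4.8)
The plan is to derive the two stated isomorphisms purely by substituting the concrete matrix-theoretic descriptions of the four groups $\Exts^i(\OA)$ and $\K_i(\OA)$, $i=0,1$, recorded in Lemma~\ref{lem:summary}, into the abstract formulas of Proposition~\ref{prop:pi_1}. No new analytic or $\sqK$-theoretic input is required: the content is just bookkeeping of the identifications already assembled in the preliminaries.

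First I would recall from Proposition~\ref{prop:pi_1} that $\pi_1(\Aut(\OA))\cong (\Exts^1(\OA)\otimes \K_0(\OA))\oplus(\Exts^0(\OA)\otimes \K_1(\OA))$ and that $\pi_2(\Aut(\OA))\cong (\Exts^1(\OA)\otimes \K_1(\OA))\oplus(\Exts^0(\OA)\otimes \K_0(\OA))\oplus \Tor(\Exts^1(\OA),\K_0(\OA))$. Then I would invoke Lemma~\ref{lem:summary} to replace each abstract factor by its matrix incarnation. For the strong extension groups one has $\Exts^1(\OA)=\Z^N/(I-\widehat{A})\Z^N$ and $\Exts^0(\OA)=\Ker(s_A\colon \Ker(I-A)\rightarrow \Z)$, the latter being equal to $\Ker(A_T\colon \Z^N\rightarrow \Z^{N+1})$ by the remark following the definition of $A_T$. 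For the $\K$-groups, the Smith normal form observation closing Lemma~\ref{lem:summary} gives $\K_0(\OA)\cong \Z^N/(I-A^t)\Z^N$ and $\K_1(\OA)\cong \Ker(I-A^t)$. Substituting these four identifications into each tensor and $\Tor$ summand produces exactly the displayed formulas.

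The only place where I would expect a reader to pause is the identification of the torsion-free group $\Exts^0(\OA)$ with $\Ker(A_T)$: the relevant matrix here is not $\widehat{A}$ but the auxiliary $(N+1)\times N$ matrix $A_T$, and the bridge is the equality $\Ker(A_T)=\Ker(s_A\colon \Ker(I-A)\rightarrow \Z)$ noted just before the statement and established in \cite[Lemma~4.2]{MaJMAA2024}. Once this identification is granted, the remaining substitutions are mechanical, so this is a notational caveat rather than a genuine obstacle; accordingly I do not anticipate any hard step in the argument.
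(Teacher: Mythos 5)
Your proposal is correct and is exactly the paper's (implicit) argument: the corollary is stated without proof precisely because it is the mechanical substitution of the identifications in Lemma~\ref{lem:summary}, together with the noted equality $\Ker(A_T)=\Ker(s_A\colon \Ker(I-A)\rightarrow \Z)$, into the formulas of Proposition~\ref{prop:pi_1}. You also correctly flagged the one point needing care, namely that $\Exts^0(\OA)$ is identified with $\Ker(A_T)$ rather than with anything built from $\widehat{A}$, so there is nothing to add.
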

\begin{remark}
We note that direct computation yields
\[\pi_1(\operatorname{Aut}(\OA))\cong \pi_2(\operatorname{Aut}(\OA))\oplus \operatorname{Tor}(\Z^N/(I-A)\Z^N),\]
where $\operatorname{Tor}(-)$ denotes the torsion part (see examples in Section 6).
\end{remark}

\subsection{$\pi_i(\Aut(\OA\otimes K(H)))$}
Combining Dadarlat's result \[\pi_n(\Aut(\A\otimes K(H))) \cong KK^{n}(\A, \A), \quad n=1,2, \dots\] with UCT and Lemma \ref{lem:summary},
we know the following formulas. 
\begin{proposition}\label{prop:pi_1timesK}
\begin{align*}
     & \pi_1(\Aut(\OA\otimes K(H)) \\ 
\cong & 
(\Extw^1(\OA)\otimes \K_0(\OA)) \, \oplus  \, (\Extw^0(\OA)\otimes \K_1(\OA)) \\
= & 
(\Z^N/ (I - {A}) \Z^N\otimes \Z^N/ (I - {A^t}) \Z^N) 
\, \oplus  \, 
(\Ker(I-A)\otimes \Ker(I-A^t)),\\
 \intertext{ and }
      & \pi_2(\Aut(\OA\otimes K(H))) \\
\cong & 
(\Extw^1(\OA)\otimes \K_1(\OA)) \, \oplus \, (\Extw^0(\OA)\otimes \K_0(\OA)) \\
      & \,  \oplus \,  \Tor(\Extw^1(\OA), \K_0(\OA)) \\
= & (\Z^N/ (I - {A}) \Z^N\otimes \Ker(I-A^t)) \, \oplus \, 
(\Ker(I-A) \otimes  \Z^N/ (I - {A^t}) \Z^N) \\
      & \,  \oplus \, \Tor(\Z^N/ (I - {A}) \Z^N, \Z^N/ (I - {A^t}) \Z^N). 
\end{align*}
\end{proposition}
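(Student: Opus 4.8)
The plan is to run the same $\sqK$-theoretic machinery used for the non-stabilized groups in Lemma~\ref{lem:Kunneth} and Proposition~\ref{prop:pi_1}, but now with $\OA$ itself playing the role that $C_\OA$ played there. First I would invoke Dadarlat's second formula in Theorem~\ref{thm:Dadarlat}, which gives $\pi_i(\Aut(\OA\otimes K(H)))\cong \sqK^i(\OA,\OA)=\Extw^i(\OA,\OA)$ for $i=1,2$; Bott periodicity identifies the $i=2$ term with $\sqK^0(\OA,\OA)$, so the two homotopy groups to be analysed are $\sqK^1(\OA,\OA)$ and $\sqK^0(\OA,\OA)$.

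Next I would apply the K\"unneth theorem (the same reference as in Lemma~\ref{lem:Kunneth}) to each of these bivariant groups, decomposing $\sqK^n(\OA,\OA)$ through the $\K$-homology $\K^i(\OA)=\Extw^i(\OA)$ of the first variable and the $\K$-theory $\K_j(\OA)$ of the second. Tracking the degrees exactly as in the proof of Lemma~\ref{lem:Kunneth}, this produces short exact sequences whose tensor part collects the summands with $i+j\equiv n\pmod 2$ and whose $\operatorname{Tor}$ part collects those with $i+j\equiv n-1\pmod 2$. Explicitly, for $\pi_1$ the tensor part is $(\Extw^1(\OA)\otimes \K_0(\OA))\oplus(\Extw^0(\OA)\otimes \K_1(\OA))$ with $\operatorname{Tor}$ part $\Tor(\Extw^0(\OA),\K_0(\OA))\oplus \Tor(\Extw^1(\OA),\K_1(\OA))$, while for $\pi_2$ the roles of $\K_0(\OA)$ and $\K_1(\OA)$ are interchanged in both parts.

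The decisive step is then to discard the vanishing $\operatorname{Tor}$ summands using Lemma~\ref{lem:summary}: the groups $\Extw^0(\OA)=\Ker(I-A)$ and $\K_1(\OA)=\Ker(I-A^t)$ are subgroups of $\Z^N$, hence free abelian and torsion free, so every $\operatorname{Tor}$ term involving one of them is zero. For $\pi_1$ this kills the entire $\operatorname{Tor}$ part and leaves the asserted direct sum; for $\pi_2$ only $\Tor(\Extw^1(\OA),\K_0(\OA))$ survives. Since the K\"unneth sequence splits unnaturally, I would read off the stated decompositions and finally substitute the matrix descriptions of Lemma~\ref{lem:summary}, namely $\Extw^1(\OA)=\Z^N/(I-A)\Z^N$, $\Extw^0(\OA)=\Ker(I-A)$, $\K_0(\OA)=\Z^N/(I-A^t)\Z^N$ and $\K_1(\OA)=\Ker(I-A^t)$, recovering the matrix forms in the statement.

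I expect no serious obstacle here: the heavy lifting is done by Dadarlat's theorem and the K\"unneth theorem, and the only genuine content is the degree bookkeeping together with the observation that the relevant kernels are torsion free. The one point that requires care is matching the degree conventions so that exactly $\Tor(\Extw^1(\OA),\K_0(\OA))=\Tor(\Z^N/(I-A)\Z^N,\Z^N/(I-A^t)\Z^N)$ remains in $\pi_2$, which I would double-check against the isomorphism $\Z^N/(I-A)\Z^N\cong \Z^N/(I-A^t)\Z^N$ coming from the Smith normal form noted after Lemma~\ref{lem:summary}.
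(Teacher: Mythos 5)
Your proposal is correct and follows essentially the same route as the paper, which disposes of this proposition in one line by combining Dadarlat's stabilized formula $\pi_n(\Aut(\A\otimes K(H)))\cong \sqK^n(\A,\A)$ with the UCT and Lemma \ref{lem:summary}; your use of the K\"unneth decomposition through $\K^i(\OA)=\Extw^i(\OA)$ is exactly the machinery the paper already set up in Lemma \ref{lem:Kunneth} and Proposition \ref{prop:pi_1}, and for these finitely generated $\K$-groups it yields the same (unnaturally split) short exact sequences as the cited UCT. Your degree bookkeeping and the observation that $\Extw^0(\OA)=\Ker(I-A)$ and $\K_1(\OA)=\Ker(I-A^t)$ are torsion free, killing all $\operatorname{Tor}$ terms except $\operatorname{Tor}(\Extw^1(\OA),\K_0(\OA))$ in $\pi_2$, match the intended argument precisely.
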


\section{$\pi_i(\Aut(\OA))$ and reciprocality}
\subsection{Reciprocality in $\K$-duals}
In \cite{Sogabe2022}, 
the second named author introduced the notion of reciprocality for a pair of
unital Kirchberg algebras $\A, \B$ with finitely generated $\K$-groups
to investigate the homotopy groups 
$\pi_i(\Aut(\A))$ of the automorphism group of $\A$
and continuous fields of $C^*$-algebras, so called bundles of $C^*$-algebras.
Suppose that $\C$ is a separable nuclear UCT $C^*$-algebra
 with finitely generated $\K$-groups.
Then Kaminker--Schochet \cite{KS} showed 
that there exists another separable nuclear UCT $C^*$-algebra written $D(\C)$
unique up to $\sqK$-equivalence satisfying the duality 
$$
\sqK^i(\C, \bbC) \cong \sqK^i(\bbC, D(\C)), \qquad i=0,1.
$$
The $C^*$-algebra $D(\C)$ is called the Spanier--Whitehead $\K$-dual of $\C$
(see \cite{KP}, \cite{KS}, etc.).
By Theorem \ref{thm:Dadarlat} due to M. Dadarlat, we see that
for a Kirchberg algebra $\A$ with finitely generated 
$\K$-groups, the mapping cone $C_\A$ has its dual $D(C_\A)$ such that 
\begin{equation} \label{eq:piAutKKDC}
\pi_i(\Aut(\A)) \cong \K_{i+1}(D(C_\A)\otimes \A), \qquad i=1,2,\dots.
\end{equation}
Suppose $\pi_i(\Aut(\A)) \cong \pi_i(\Aut(\B)), i=1,2, \dots.$
By \eqref{eq:piAutKKDC}, we have
$D(C_\A)\otimes \A \underset{\sqK}{\sim} D(C_\B)\otimes \B.$
 The reciprocality for a pair $\A$ and $\B$ of unital Kirchberg algebras
 was introduced in \cite{Sogabe2022}
 in the following way.
\begin{definition}[{Sogabe \cite{Sogabe2022}}]\label{def:reciprocal}
Let $\A, \B$ be unital UCT Kirchberg algebras 
both of them have finitely generated $\K$-groups.
Then $\A$ and $\B$ are said to be reciprocal if both 
$\A \underset{\sqK}{\sim} D(C_\B)$ and
$\B \underset{\sqK}{\sim} D(C_\A)$ hold.
\end{definition}
The second named author proved the following theorem in \cite{Sogabe2022}.
\begin{theorem}[{\cite[Theorem 1.2]{Sogabe2022}}] \label{thm:Sogabe}
The homotopy groups 
$\pi_i(\Aut(\A))$ and $\pi_i(\Aut(\B))$ are isomorphic for all $ i=1,2, \dots$
if and only if 
either $A\cong \B$ or $\A$ and $\B$ are reciprocal.
\end{theorem}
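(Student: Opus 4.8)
The plan is to reduce the homotopy-theoretic hypothesis to a $\sqK$-theoretic statement about tensor products, and then to extract the stated dichotomy from the resulting $\sqK$-equivalence. First I would dispose of the easy implication. If $\A\cong\B$ the claim is immediate, and if $\A$ and $\B$ are reciprocal, then Definition~\ref{def:reciprocal} supplies $\A\underset{\sqK}{\sim}D(C_\B)$ and $\B\underset{\sqK}{\sim}D(C_\A)$, so that
\[
D(C_\A)\otimes\A \underset{\sqK}{\sim} D(C_\A)\otimes D(C_\B)\underset{\sqK}{\sim} \B\otimes D(C_\B)=D(C_\B)\otimes\B.
\]
Through the identification $\pi_i(\Aut(\A))\cong\K_{i+1}(D(C_\A)\otimes\A)$ of \eqref{eq:piAutKKDC}, this forces $\pi_i(\Aut(\A))\cong\pi_i(\Aut(\B))$ for all $i$.

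For the converse I would first record that \eqref{eq:piAutKKDC} converts the hypothesis into an isomorphism of graded groups $\K_*(D(C_\A)\otimes\A)\cong\K_*(D(C_\B)\otimes\B)$, where two-periodicity lets one recover both degrees from $\pi_1$ and $\pi_2$. Since every algebra involved is separable, nuclear and satisfies the UCT, such an isomorphism of graded $\K$-groups is equivalent to a $\sqK$-equivalence $D(C_\A)\otimes\A\underset{\sqK}{\sim}D(C_\B)\otimes\B$. The remaining task is then purely algebraic: I must show that this single equivalence of tensor products can occur only in the two geometric situations named in the theorem.

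Next I would compute the invariant explicitly. The Spanier--Whitehead dual satisfies $\K_i(D(C_\A))\cong\K^i(C_\A)=\Exts^{1-i}(\A)$, so the K{\"u}nneth theorem (as in Lemma~\ref{lem:Kunneth}) expresses $\K_*(D(C_\A)\otimes\A)$ through the four groups $\Exts^0(\A),\Exts^1(\A),\K_0(\A),\K_1(\A)$ together with their $\Tor$ and $\Ext$ corrections. These groups are not independent: the six term sequence \eqref{eq:Extcyclicexact}, together with the UCT relating $\Extw^i(\A)=\K^i(\A)$ to $\K_i(\A)$, shows that the entire K{\"u}nneth package is governed by the triple $(\K_0(\A),[1_\A]_0,\K_1(\A))$. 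Since this triple is a complete invariant of $\A$ by Kirchberg--Phillips classification and R{\o}rdam's range result \cite{Ro}, the plan is to prove that an isomorphism of the two K{\"u}nneth packages must be induced either by a unit-preserving $\sqK$-equivalence $\A\to\B$ — giving $\A\cong\B$ — or by the \emph{swapped} matching $\K_*(\A)\cong\Exts^*(\B)$ and $\K_*(\B)\cong\Exts^*(\A)$, which is precisely reciprocality.

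The hard part is this splitting into two cases, and I expect it to be the main obstacle. The hypothesis furnishes only one isomorphism, of the mixed group $\K_*(D(C_\A)\otimes\A)$, and there is no formal reason for it to be \emph{pure}, i.e.\ to arise from an equivalence of either of the two admissible types; moreover the $\Tor$ and $\Ext$ terms and the unnatural splittings of the K{\"u}nneth and UCT sequences obscure where the distinguished classes sit. The decisive step is therefore to locate the unit class $[1_\A]_0$ — equivalently the fundamental class dual to it — inside $\K_*(D(C_\A)\otimes\A)$, to show that any isomorphism of invariants must send it to a corresponding distinguished class on the $\B$ side, and to verify that the only two admissible destinations of this class are exactly the unit-preserving and the reciprocal alternatives. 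Once the distinguished class is pinned down, Kirchberg--Phillips classification upgrades the $\sqK$-level conclusion to the isomorphism $\A\cong\B$ in the first case, while in the second case one reads off directly the two $\sqK$-equivalences of Definition~\ref{def:reciprocal}.
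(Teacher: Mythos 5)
The paper contains no proof of this statement: it is imported verbatim from \cite[Theorem 1.2]{Sogabe2022}, so your attempt has to be measured against the proof in that reference, which is a substantial structural analysis. The parts of your proposal that are actually carried out are correct. The ``if'' direction works exactly as you say: reciprocality plus the stability of $\sqK$-equivalence under tensoring by a fixed separable nuclear algebra gives $D(C_\A)\otimes\A\underset{\sqK}{\sim}D(C_\B)\otimes\B$, and \eqref{eq:piAutKKDC} converts this into isomorphisms of all homotopy groups. Your reduction of the converse is also sound: by Bott periodicity the hypothesis for all $i$ collapses to $i=1,2$, the isomorphisms of graded $\K$-groups upgrade via the UCT to $D(C_\A)\otimes\A\underset{\sqK}{\sim}D(C_\B)\otimes\B$, and the K{\"u}nneth theorem expresses the invariant through $\K_*(\A)$ and $\Exts^*(\A)$.

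But the proof stops exactly where the theorem begins, and the strategy you sketch for the decisive step would not work as stated. You propose to ``locate the unit class inside $\K_*(D(C_\A)\otimes\A)$'' and show that any isomorphism of invariants must send it to the corresponding distinguished class on the $\B$ side. The hypothesis, however, supplies only \emph{abstract} group isomorphisms $\pi_i(\Aut(\A))\cong\pi_i(\Aut(\B))$, with no naturality whatsoever: an abstract isomorphism of abelian groups preserves no distinguished element, and nothing forces it to be induced by a morphism of algebras or to be compatible with the (unnaturally split) K{\"u}nneth and UCT sequences. Worse, the invariant is a direct sum of tensor and $\Tor$ terms built from the pair $(\K_*(\A),\Exts^*(\A))$, and an abstract isomorphism of such direct sums does not even determine the constituent factors, let alone the position of a unit class. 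The genuine content of \cite[Theorem 1.2]{Sogabe2022} is precisely the algebra needed to overcome this, and the flavor of what is required is visible in this very paper: Proposition \ref{prop:stabilization2} needs an inductive peeling-off of cyclic summands ($n_k=m_l$, then passing to $T'_A$, $T'_B$, and so on) merely to recover $\K_0(\OA)$ from the analogous invariant in the stabilized Cuntz--Krieger case, and the Remark following it exhibits non-isomorphic stable Kirchberg algebras with the same homotopy groups of their automorphism groups, showing that no formal or naturality-based argument can yield the dichotomy. As submitted, the ``only if'' direction --- the entire substance of the theorem --- is a statement of intent rather than a proof.
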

Let us provide a couple of lemmas to study the reciprocality in  Cuntz--Krierger algebras.

\begin{lemma}\label{lem:DCOATAT}
For an irreducible non permutation matrix $A$ with entries in $\{0,1\}$,
we have
$D(C_{\OA}) \underset{\sqK}{\sim} \TAT$,
that is,
the K-dual of the mapping cone $C_{\OA}$ of $\OA$ is $\sqK$-equivalent to the Toeplitz algebra $\TAT$ 
defined by the transposed matrix $A^t$.
\end{lemma}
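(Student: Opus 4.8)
The plan is to place both $D(C_{\OA})$ and $\TAT$ in the UCT bootstrap class and to show that they have isomorphic $\K$-groups; the UCT then upgrades this to a $\sqK$-equivalence. Here I use that the K-dual $D(\C)$ is, by the Kaminker--Schochet duality recalled above, determined up to $\sqK$-equivalence, and that for separable nuclear UCT algebras with finitely generated $\K$-groups a $\sqK$-equivalence class is detected by $\K$-theory alone. Thus it suffices to compute $\K_i(D(C_{\OA}))$ and $\K_i(\TAT)$ and match them for $i=0,1$.

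First I would read off the two sides. Applying the defining duality $\sqK^i(\C,\bbC)\cong \sqK^i(\bbC, D(\C))$ to $\C = C_{\OA}$ gives $\K_i(D(C_{\OA}))\cong \K^i(C_{\OA})$, and the convention $\Exts^n(\OA) = \K^{1-n}(C_{\OA})$ turns this into
\[
\K_0(D(C_{\OA}))\cong \Exts^1(\OA), \qquad \K_1(D(C_{\OA}))\cong \Exts^0(\OA).
\]
On the other side, Lemma \ref{lem:summary} gives directly
\begin{align*}
\K_0(\TAT) &= \Exts^1(\OA) = \Z^N/(I-\widehat{A})\Z^N, \\
\K_1(\TAT) &= \Exts^0(\OA) = \Ker(I-\widehat{A})/i_1(\Z).
\end{align*}
Comparing the two displays yields $\K_i(D(C_{\OA}))\cong \K_i(\TAT)$ for $i=0,1$.

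To conclude, I would check that $\TAT$ is UCT with finitely generated $\K$-groups (the latter is automatic since $A$ is finite). This is clear from the Toeplitz extension \eqref{eq:toeplitzext} applied to $A^t$, namely $0\to K(H)\to \TAT\to \OAT\to 0$: both $K(H)$ and $\OAT$ lie in the bootstrap class and are nuclear, hence so is $\TAT$. As $D(C_{\OA})$ is UCT by construction, two UCT algebras with isomorphic $\K$-groups are $\sqK$-equivalent, giving $D(C_{\OA})\underset{\sqK}{\sim}\TAT$.

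The step demanding the most care is the degree bookkeeping: one must track the conventions $\Exts^n(\OA)=\K^{1-n}(C_{\OA})$ and $\K_i(D(\C))=\K^i(\C)$ so that the groups are matched in the correct degrees rather than swapped. The only genuinely non-formal input is the UCT-based fact that an isomorphism of $\K$-groups of two bootstrap algebras is realized by a $\sqK$-equivalence; no explicit duality pairing between $\TAT$ and $C_{\OA}$ needs to be constructed.
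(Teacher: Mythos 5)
Your proof is correct and takes essentially the same route as the paper: both arguments reduce to the identities $\Exts^1(\OA)=\K_0(\TAT)$ and $\Exts^0(\OA)=\K_1(\TAT)$ from Lemma \ref{lem:summary} and then conclude via the UCT, the paper phrasing this as a direct verification of the defining duality isomorphisms $\sqK^i(C_{\OA},\bbC)\cong\sqK^i(\bbC,\TAT)$, which is the same K-group matching you perform. Your explicit check that the Toeplitz extension places $\TAT$ in the bootstrap class simply spells out the paper's opening remark that $\TAT$ satisfies the UCT.
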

\begin{proof}
Since $\OA, \OAT$ satisfy UCT,
$C_\OA$ and $\TAT$ also satisfy UCT.
We have 
$$
\sqK(C_\OA, \bbC)  =\Exts^1(\OA) =\Exts(\OA) = \Z^N/(I- \widehat{A})\Z^N = \K_0(\TAT) 
=\sqK(\bbC, \TAT),
$$
$$
\sqK^1(C_\OA, \bbC)  =\Exts^0(\OA)  = \Ker(s_A: \Ker(I-A) \rightarrow \Z)
= \K_1(\TAT) 
=\sqK^1(\bbC, \TAT),
$$ 
and hence $\TAT$ is the Spanier--Whitehead $\K$-dual of $C_{\OA}$.
\end{proof}

For a finitely generated abelian group $G$,
we write  the rank of the torsion free part of $G$
as $\rank(G)$.
The group $G$ is a finitely generated $\Z$-module and 
$\rank(G)$ is the dimension of the $\mathbb{Q}$-vector space $\mathbb{Q}\otimes_\Z G$.
\begin{lemma}\label{lem:rankTA}
For an irreducible matrix $A=[A(i,j)]_{i,j=1}^N$ 
with entries in $\{0,1\},$
we have
\begin{equation*}
\rank(\K_0(\TAT))= \rank(\K_1(\TAT)) +1.
\end{equation*}
\end{lemma}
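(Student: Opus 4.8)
The plan is to extract both ranks from the cyclic six term exact sequence \eqref{eq:6termA} and compare them. By Lemma \ref{lem:summary} we have $\K_0(\TAT) = \Exts^1(\OA) = \Z^N/(I-\widehat{A})\Z^N$ and $\K_1(\TAT) = \Exts^0(\OA) = \Ker(s_A\colon \Ker(I-A)\rightarrow \Z)$, so it suffices to compute the two ranks purely in terms of the integer matrices $I-A$ and $I-\widehat{A}$. First I would exploit the factorization $I-\widehat{A} = (I-A)(I-R_1)$. Since $R_1$ is the rank-one matrix whose first row is $[1,\dots,1]$, the matrix $I-R_1$ is an integer matrix with $\det(I-R_1) = 1-\operatorname{tr}(R_1) = 1-1 = 0$; in fact $I-R_1$ has rank $N-1$ with one-dimensional kernel. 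This already signals the shift by $1$ in the statement.

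Next I would exploit that rank is additive along exact sequences: tensoring \eqref{eq:6termA} with $\mathbb{Q}$ kills all torsion and turns it into an exact sequence of finite-dimensional $\mathbb{Q}$-vector spaces
\begin{equation*}
0\to \K_1(\TAT)\otimes\mathbb{Q}\to \Ker(I-A)\otimes\mathbb{Q}\to \mathbb{Q}
\to \K_0(\TAT)\otimes\mathbb{Q}\to \Z^N/(I-A)\Z^N\otimes\mathbb{Q}\to 0,
\end{equation*}
using that $\mathbb{Q}$ is flat so the sequence stays exact. The alternating sum of dimensions in an exact sequence is zero, giving
\begin{equation*}
\rank(\K_1(\TAT)) - \rank(\Ker(I-A)) + 1 - \rank(\K_0(\TAT)) + \rank(\Z^N/(I-A)\Z^N) = 0.
\end{equation*}
Now the Smith normal form (or rank-nullity) for the integer matrix $I-A$ gives the key identity $\rank(\Ker(I-A)) = \rank(\Z^N/(I-A)\Z^N)$, since both equal $N$ minus the rank of $I-A$ over $\mathbb{Q}$ (the kernel dimension equals the cokernel's free rank for a square matrix). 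Substituting this cancellation into the alternating-sum relation leaves exactly $\rank(\K_0(\TAT)) = \rank(\K_1(\TAT)) + 1$, as claimed.

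The main obstacle is not any hard computation but rather justifying the rank-nullity identity $\rank(\Ker(I-A)) = \rank(\Z^N/(I-A)\Z^N)$ cleanly, and ensuring the exactness of \eqref{eq:6termA} survives the flat base change to $\mathbb{Q}$. Both are standard: for a square integer matrix $M$, over $\mathbb{Q}$ the map $M\colon \mathbb{Q}^N\to\mathbb{Q}^N$ has kernel and cokernel of equal dimension $N-\operatorname{rank}_\mathbb{Q}(M)$, and tensoring the already-given exact sequence with the flat module $\mathbb{Q}$ preserves exactness. An alternative route, which avoids invoking the six term sequence at all, is to read the ranks directly off the matrices: $\rank(\Z^N/(I-\widehat{A})\Z^N)$ equals the corank of $I-\widehat{A} = (I-A)(I-R_1)$ over $\mathbb{Q}$, while $\rank\bigl(\Ker(s_A\colon \Ker(I-A)\to\Z)\bigr)$ equals $\rank(\Ker(I-A))$ minus $1$ precisely when $s_A$ is nontrivial on $\Ker(I-A)\otimes\mathbb{Q}$. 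Since $A$ is irreducible and not a permutation matrix, the Perron--Frobenius eigenvalue exceeds $1$, so $1$ is not an eigenvalue and one checks $s_A$ is surjective after tensoring with $\mathbb{Q}$, forcing the drop by exactly $1$. I would present the six term sequence argument as the main proof since it is the shortest, and keep the Perron--Frobenius remark in reserve to confirm that $s_A\otimes\mathbb{Q}$ is indeed nonzero.
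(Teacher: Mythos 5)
Your main argument is correct and is essentially the paper's own proof: tensor the six term exact sequence \eqref{eq:6termA} with $\mathbb{Q}$, take the alternating sum of dimensions of the resulting exact sequence of $\mathbb{Q}$-vector spaces, and cancel using $\rank(\Ker(I-A))=\rank(\Z^N/(I-A)\Z^N)$, which the paper likewise draws from the Smith normal form. So the core of your proposal matches the paper step for step.

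One caution about the remark you keep in reserve: the claim that irreducibility plus non-permutation forces $1$ not to be an eigenvalue of $A$ is false. The Perron--Frobenius eigenvalue exceeding $1$ says nothing about the other eigenvalues, and indeed $\Ker(I-A)\cong\K_1(\OA)$ is nonzero for many irreducible non-permutation matrices (the construction before Proposition \ref{prop:groups} realizes $\Ker(I-A)=\Z^r$ for every $r\geq 0$). Worse, the assertion is self-defeating for your intended use: if $1$ were not an eigenvalue, then $\Ker(I-A)=0$, so $s_A$ would be the zero map and $s_A\otimes\mathbb{Q}$ could not be surjective; in that case the shift by $1$ comes instead from the injectivity of $\hat{\iota}_A\otimes\mathbb{Q}$, i.e.\ from $\hat{\iota}_A(1)$ having infinite order in $\Z^N/(I-\widehat{A})\Z^N$. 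By exactness, $\Im(s_A)=\Ker(\hat{\iota}_A)$, so whether the rational drop occurs at $s_A$ or at $\hat{\iota}_A$ genuinely varies with $A$, and the alternative route would need a case analysis that the six term sequence handles automatically. Keep your main proof; discard the reserve remark.
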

\begin{proof}
Applying the exact functor $\mathbb{Q}\otimes_\Z-$ to the sequence \eqref{eq:6termTATOA},
one has the following exact sequence of $\mathbb{Q}$-vector spaces
\[0\to \mathbb{Q}^{\rank(\K_1(\TAT))}\to\mathbb{Q}^{\rank(\K_1(\OAT))}\to\mathbb{Q}\to\mathbb{Q}^{\rank(\K_0(\TAT))}\to\mathbb{Q}^{\rank(\K_0(\OAT))}\to 0,\]
which implies $\rank(\K_1(\TAT))-\rank(\K_1(\OAT))+1-\rank(\K_0(\TAT))+\rank(\K_0(\OAT))=0$.
The statement follows from the equation \[\rank(\K_1(\OAT))=\rank (\Ker (I-A))=\rank (\Z^N/(I-A)\Z^N)=\rank(\K_0(\OAT)).\]


\end{proof}
\begin{lemma}\label{lem:nonreciprocal}
For every pair of irreducible non permutation matrices $A, B$ with entries in $\{0,1\}$,
$\OA$ is not reciprocal to $\OB$. 
\end{lemma}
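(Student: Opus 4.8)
The plan is to reduce reciprocality to a $\sqK$-equivalence with a Toeplitz algebra and then extract a contradiction from a rank count. By Definition \ref{def:reciprocal}, if $\OA$ were reciprocal to $\OB$ then in particular $\OA \underset{\sqK}{\sim} D(C_{\OB})$. By Lemma \ref{lem:DCOATAT} the $\K$-dual $D(C_{\OB})$ is $\sqK$-equivalent to the Toeplitz algebra $\TBT$, so reciprocality would force $\OA \underset{\sqK}{\sim} \TBT$. It therefore suffices to rule this single $\sqK$-equivalence out (we need not even touch the second reciprocality condition).

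Next I would invoke that $\sqK$-equivalence preserves the $\K$-groups, hence in particular the ranks of their torsion free parts, giving $\rank(\K_0(\OA)) = \rank(\K_0(\TBT))$ and $\rank(\K_1(\OA)) = \rank(\K_1(\TBT))$. The two sides of these equalities behave incompatibly. On the Cuntz--Krieger side the ranks of the two $\K$-groups coincide, since $\K_0(\OA) = \Z^N/(I-A)\Z^N$ and $\K_1(\OA) = \Ker(I-A)$ both have rank $N - \rank_{\mathbb{Q}}(I-A)$; this is precisely the identity already exploited in the proof of Lemma \ref{lem:rankTA}. On the Toeplitz side, however, Lemma \ref{lem:rankTA} applied to $B$ gives $\rank(\K_0(\TBT)) = \rank(\K_1(\TBT)) + 1$. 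Feeding the equality $\rank(\K_0(\OA)) = \rank(\K_1(\OA))$ through the two rank identities coming from the $\sqK$-equivalence yields $\rank(\K_0(\TBT)) = \rank(\K_1(\TBT))$, contradicting the displayed $+1$. Hence $\OA \underset{\sqK}{\sim} \TBT$ is impossible, and $\OA$ cannot be reciprocal to $\OB$.

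As for the main obstacle, there is essentially none at the level of this lemma, since the heavy lifting is carried out by the two preceding lemmas. The only point that genuinely matters is the structural observation that a Cuntz--Krieger algebra satisfies $\rank(\K_0) = \rank(\K_1)$ while a Toeplitz algebra $\TBT$ satisfies $\rank(\K_0) = \rank(\K_1) + 1$; these are irreconcilable under the rank-preserving relation $\underset{\sqK}{\sim}$, and this off-by-one mismatch is the entire content of the statement.
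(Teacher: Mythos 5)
Your proof is correct and follows essentially the same route as the paper's own argument: both deduce $\OA \underset{\sqK}{\sim} \TBT$ from Lemma \ref{lem:DCOATAT}, combine the equality $\rank(\K_0(\OA))=\rank(\K_1(\OA))$ with rank-preservation under $\sqK$-equivalence, and contradict the off-by-one identity $\rank(\K_0(\TBT))=\rank(\K_1(\TBT))+1$ of Lemma \ref{lem:rankTA}. The only cosmetic difference is that you make explicit that one half of the reciprocality hypothesis already suffices, a point the paper uses implicitly.
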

\begin{proof}
Suppose that $\OA$ and  $\OB$ are reciprocal (i.e.,
$\OA\underset{\sqK}{\sim}D(C_{\OB}),
$
$
\OB\underset{\sqK}{\sim}D(C_{\OA})).
$
Since Lemma \ref{lem:DCOATAT} shows 
$
\OA\underset{\sqK}{\sim}\TBT
$,
one has $\K_*(\OA) \cong \K_*(\TBT)$ and $\rank(\K_0(\TBT))=\rank(\K_0(\OA))=\rank(\K_1(\OA))=\rank(\K_1(\TBT))$.
This contradicts Lemma \ref{lem:rankTA}.
\end{proof}
\begin{theorem}\label{thm:homotopy=isom}
The homotopy groups $\pi_i(\Aut(\OA))$ and $\pi_i(\Aut(\OB))$
are isomorphic for every $i=1,2$
if and only if $\OA$ is isomorphic to $\OB.$
\end{theorem}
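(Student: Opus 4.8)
The plan is to prove the two implications separately, the \emph{if} direction being formal and the \emph{only if} direction resting on the two results just established. If $\OA \cong \OB$, then the induced isomorphism of the topological groups $\Aut(\OA) \cong \Aut(\OB)$ yields $\pi_i(\Aut(\OA)) \cong \pi_i(\Aut(\OB))$ for every $i$, in particular for $i=1,2$. For the converse I would first record that, since $A$ is irreducible and not a permutation matrix, $\OA$ is a unital UCT Kirchberg algebra, and that $\K_*(\OA)$ is finitely generated because it is computed by Lemma \ref{lem:summary} as the finitely generated groups $\Z^N/(I-A)\Z^N$ and $\Ker(I-A)$; the same holds for $\OB$. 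This places us in the setting in which both Theorem \ref{thm:Sogabe} and the identification \eqref{eq:piAutKKDC} are available.

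The one genuinely non-formal step is to upgrade the hypothesis, which only concerns $i=1,2$, to an isomorphism of \emph{all} higher homotopy groups, since Theorem \ref{thm:Sogabe} is phrased for all $i=1,2,\dots$. Here I would invoke \eqref{eq:piAutKKDC}, namely $\pi_i(\Aut(\OA)) \cong \K_{i+1}(D(C_{\OA})\otimes\OA)$ for $i\geq 1$, together with the $2$-periodicity of $\K$-theory. The right-hand side depends only on the parity of $i$: one gets $\pi_i(\Aut(\OA)) \cong \K_0(D(C_{\OA})\otimes\OA)$ for odd $i$ and $\pi_i(\Aut(\OA)) \cong \K_1(D(C_{\OA})\otimes\OA)$ for even $i$, and likewise for $\OB$. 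Consequently the two assumed isomorphisms at $i=1$ and $i=2$ cover both parities and therefore force $\pi_i(\Aut(\OA)) \cong \pi_i(\Aut(\OB))$ for every $i=1,2,\dots$.

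With the hypothesis now in the exact form demanded by Theorem \ref{thm:Sogabe}, I would apply that theorem to conclude that either $\OA \cong \OB$ or $\OA$ and $\OB$ are reciprocal. Lemma \ref{lem:nonreciprocal} shows that two Cuntz--Krieger algebras arising from irreducible non permutation matrices can never be reciprocal, so the second alternative is excluded and only $\OA \cong \OB$ survives. The main obstacle is precisely the parity/periodicity passage of the previous paragraph: it is the only point where the restriction to $i=1,2$ must be reconciled with the all-$i$ formulation of Theorem \ref{thm:Sogabe}, and everything before and after it is a direct appeal to the cited results.
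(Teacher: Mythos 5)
Your proposal is correct and follows essentially the same route as the paper, which likewise deduces the theorem from Theorem \ref{thm:Sogabe} combined with Lemma \ref{lem:nonreciprocal}. The one step you elaborate---upgrading the hypothesis at $i=1,2$ to all $i\geq 1$ via \eqref{eq:piAutKKDC} and the $2$-periodicity of $\K$-theory, so that the two given degrees cover both parities---is left implicit in the paper's proof, and your explicit treatment of it is accurate and closes a genuine (if minor) gap in the citation.
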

\begin{proof}
The assertion follows from Theorem \ref{thm:Sogabe} 
together with Lemma \ref{lem:nonreciprocal}.
\end{proof}

\subsection{Main theorem}

\begin{lemma}\label{lem:K0Ext1=>K_1Ext0}
Let $A=[A(i,j)]_{i,j=1}^N, B=[B(i,j)]_{i,j=1}^M$ 
be irreducible non permutation matrices with entries in $\{0,1\}.$
Suppose that 
$\K_0(\OA)\cong \K_0(\OB)$ and $\Exts^1(\OA) \cong \Exts^1(\OB).$
Then we have
$\K_1(\OA)\cong \K_1(\OB)$ and $\Exts^0(\OA) \cong \Exts^0(\OB).$
\end{lemma}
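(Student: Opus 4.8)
The plan is to exploit the fact that, among the four groups $\K_0(\OA), \K_1(\OA), \Exts^1(\OA), \Exts^0(\OA)$, the two groups $\K_1(\OA)$ and $\Exts^0(\OA)$ are \emph{torsion free}, so that each is determined up to isomorphism by a single numerical invariant, namely its rank. By Lemma \ref{lem:summary} we may realize $\K_1(\OA) \cong \Ker(I-A^t)$ and $\Exts^0(\OA) \cong \Ker(s_A\colon \Ker(I-A)\to\Z)$ as subgroups of the free abelian group $\Z^N$; hence both are finitely generated and torsion free, i.e.\ free abelian, and two such groups are isomorphic if and only if they have equal rank. Thus the entire statement reduces to matching ranks.

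For the $\K$-theory part, I would recall the rank identity $\rank(\K_1(\OA)) = \rank(\K_0(\OA))$, which follows from rank--nullity for the integer matrix $I-A$ exactly as in the proof of Lemma \ref{lem:rankTA}, where one uses $\rank(\Ker(I-A)) = \rank(\Z^N/(I-A)\Z^N)$. Since $\K_0(\OA)\cong\K_0(\OB)$ forces $\rank(\K_0(\OA)) = \rank(\K_0(\OB))$, we obtain $\rank(\K_1(\OA)) = \rank(\K_1(\OB))$, and the torsion-freeness observed above then upgrades this equality of ranks to an isomorphism $\K_1(\OA)\cong\K_1(\OB)$.

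For the extension-group part, the key input is Lemma \ref{lem:rankTA} itself: writing $\Exts^1(\OA) = \K_0(\TAT)$ and $\Exts^0(\OA) = \K_1(\TAT)$ as in Lemma \ref{lem:summary}, that lemma reads $\rank(\Exts^1(\OA)) = \rank(\Exts^0(\OA)) + 1$, so that $\rank(\Exts^0(\OA)) = \rank(\Exts^1(\OA)) - 1$ is a function of $\rank(\Exts^1(\OA))$ alone. From $\Exts^1(\OA)\cong\Exts^1(\OB)$ we then get $\rank(\Exts^0(\OA)) = \rank(\Exts^0(\OB))$, and again torsion-freeness gives $\Exts^0(\OA)\cong\Exts^0(\OB)$.

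I do not expect a genuine obstacle here: once one notices that $\K_1(\OA)$ and $\Exts^0(\OA)$ are free abelian, the problem collapses to the two rank relations, both of which are already available, one from rank--nullity and one from Lemma \ref{lem:rankTA}. The only point requiring a little care is the bookkeeping with the transpose and with the identifications of Lemma \ref{lem:summary}, together with the ``$+1$'' in the Toeplitz rank formula, which is exactly what distinguishes the $\Exts$ computation from the $\K$-theory one. One could alternatively run the argument directly through the five-term exact sequence \eqref{eq:6termA} (equivalently \eqref{eq:Extcyclicexact}), but the rank argument is shorter and sidesteps any analysis of the connecting maps.
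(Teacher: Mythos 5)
Your proposal is correct and is essentially the paper's own proof: the paper likewise writes $\K_1(\OA)\cong\K_1(\OAT)=\Z^{r_A}$ with $r_A=\rank(\K_0(\OAT))$ (the rank--nullity identity) and $\Exts^0(\OA)=\K_1(\TAT)=\Z^{R_A}$ with $\rank(\Exts^1(\OA))=R_A+1$ via Lemmas \ref{lem:DCOATAT} and \ref{lem:rankTA}, then concludes by matching ranks exactly as you do. The only cosmetic difference is that the paper records the decompositions $\K_0(\OA)\cong\Z^{r_A}\oplus T_A$ and $\Exts^1(\OA)\cong\Z^{R_A+1}\oplus\tilde{T}_A$ explicitly, whereas you phrase the same reduction as ``freeness plus equality of ranks.''
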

\begin{proof}
Let us denote by $T_A$ (resp. $\tilde{T}_A$) the torsion part of $\K_0(\OAT)\cong \K_0(\OA)$ (resp. $\K_0(\TAT)$), and let $r_A$ (resp. $R_A$) be $\rank(\K_1(\OAT))=\rank(\K_0(\OAT))$ (resp. $\K_1(\TAT)$).
Then, one has
\[\K_0(\OA)\cong \K_0(\OAT)=\Z^{r_A}\oplus T_A, \quad \K_1(\OA)\cong\K_1(\OAT)=\Z^{r_A},\]
and Lemma \ref{lem:DCOATAT}, \ref{lem:rankTA} show that
\[\Exts^1(\OA)= \K_0(\TAT)=\Z^{R_A+1}\oplus \tilde{T}_A,\]
\[\Exts^0(\OA)= \K_1(\TAT)=\Z^{R_A}.\]
Applying the same formula for the matrix $B$,
the statement follows immediately.

\end{proof}

We present the following theorem which is the main theorem in this paper.
\begin{theorem}\label{thm:mainthm}
Let $A=[A(i,j)]_{i,j=1}^N, B=[B(i,j)]_{i,j=1}^M$ 
be irreducible non permutation matrices with entries in $\{0,1\}.$
Then the following five conditions are mutually equivalent.
 \begin{enumerate}
\renewcommand{\theenumi}{(\roman{enumi})}
\renewcommand{\labelenumi}{\textup{\theenumi}}
\item
$\pi_i(\Aut(\OA))\cong \pi_i(\Aut(\OB)), i=1,2$.
\item $\OA \cong \OB.$
\item
$\K_0(\OA)\cong \K_0(\OB)$ and $\Exts^1(\OA) \cong \Exts^1(\OB)$.
\item
$\Extw^1(\OA)\cong \Extw^1(\OB)$ and $\Exts^1(\OA) \cong \Exts^1(\OB)$.
\item
$\Z^N/(I-A) \Z^N \cong \Z^M/(I-B) \Z^M$ and 
$\Z^N/(I-\widehat{A}) \Z^N \cong \Z^M/(I-\widehat{B}) \Z^M$.
\end{enumerate}
\end{theorem}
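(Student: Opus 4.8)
The plan is to observe that three of the five conditions are mere reformulations of one another, and to link the two resulting blocks of conditions by a short cycle of implications that invokes exactly the substantive results already proved: the homotopy-group formulas of Proposition \ref{prop:pi_1}, the rank-counting Lemma \ref{lem:K0Ext1=>K_1Ext0}, and the rigidity Theorem \ref{thm:homotopy=isom}. Concretely I would first dispose of the reformulations and then prove the cycle
\[
\text{(ii)} \Longrightarrow \text{(iii)} \Longrightarrow \text{(i)} \Longrightarrow \text{(ii)}.
\]

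For the reformulations, Lemma \ref{lem:summary} supplies the identities $\Extw^1(\OA) = \Z^N/(I-A)\Z^N \cong \K_0(\OA)$ and $\Exts^1(\OA) = \Z^N/(I-\widehat{A})\Z^N$, together with the analogous identities for $B$. Replacing $\Extw^1$ by the isomorphic group $\K_0$ turns (iv) into (iii) verbatim, and writing $\Extw^1$ and $\Exts^1$ as the cokernels of $I-A$ and $I-\widehat{A}$ turns (iv) into (v); hence (iii) $\Longleftrightarrow$ (iv) $\Longleftrightarrow$ (v) needs no argument beyond Lemma \ref{lem:summary}. The implication (ii) $\Longrightarrow$ (iii) is immediate: an isomorphism $\OA \cong \OB$ induces isomorphisms of $\K$-theory groups and, by functoriality of the mapping-cone construction and of $\sqK$-theory, of the strong extension groups $\Exts^1$.

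The heart of the argument is (iii) $\Longrightarrow$ (i). Assuming $\K_0(\OA) \cong \K_0(\OB)$ and $\Exts^1(\OA) \cong \Exts^1(\OB)$, Lemma \ref{lem:K0Ext1=>K_1Ext0} upgrades these to isomorphisms of the remaining two invariants, $\K_1(\OA) \cong \K_1(\OB)$ and $\Exts^0(\OA) \cong \Exts^0(\OB)$. Proposition \ref{prop:pi_1} then expresses both $\pi_1(\Aut(\OA))$ and $\pi_2(\Aut(\OA))$ purely as direct sums of tensor products together with a single $\Tor$ term formed from the four groups $\Exts^1(\OA), \Exts^0(\OA), \K_0(\OA), \K_1(\OA)$; since $\otimes$ and $\Tor$ send isomorphic arguments to isomorphic groups, the homotopy groups of $\Aut(\OA)$ and $\Aut(\OB)$ agree in degrees $1$ and $2$, which is (i). Finally (i) $\Longrightarrow$ (ii) is precisely Theorem \ref{thm:homotopy=isom}, closing the cycle and proving the mutual equivalence of all five conditions.

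I expect no genuine obstacle inside this proof, since the difficulty has been front-loaded into the lemmas. The one step that is conceptually more than bookkeeping is (iii) $\Longrightarrow$ (i), where the passage from the two primary invariants $\K_0, \Exts^1$ to the secondary invariants $\K_1, \Exts^0$ rests on the rank identity $\rank(\K_0(\TAT)) = \rank(\K_1(\TAT)) + 1$ of Lemma \ref{lem:rankTA} that powers Lemma \ref{lem:K0Ext1=>K_1Ext0}; the deepest input, buried in (i) $\Longrightarrow$ (ii), is that distinct Cuntz--Krieger algebras are never reciprocal (Lemma \ref{lem:nonreciprocal}), which collapses Sogabe's dichotomy of Theorem \ref{thm:Sogabe} to plain isomorphism. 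Both facts are already in hand, so assembling them into the five-way equivalence is routine.
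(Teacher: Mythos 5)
Your proposal is correct and follows essentially the same route as the paper's own proof: the identifications (iii) $\Leftrightarrow$ (iv) $\Leftrightarrow$ (v) via Lemma \ref{lem:summary}, the trivial implication (ii) $\Rightarrow$ (iii), the upgrade (iii) $\Rightarrow$ (i) through Lemma \ref{lem:K0Ext1=>K_1Ext0} and the K\"unneth formulas of Proposition \ref{prop:pi_1}, and the closing step via Theorem \ref{thm:homotopy=isom} (the paper quotes it as the equivalence (i) $\Leftrightarrow$ (ii), whereas you use only the direction (i) $\Rightarrow$ (ii), an inessential difference). Your identification of where the real content lies --- the rank identity of Lemma \ref{lem:rankTA} behind Lemma \ref{lem:K0Ext1=>K_1Ext0} and the non-reciprocality Lemma \ref{lem:nonreciprocal} behind Theorem \ref{thm:homotopy=isom} --- matches the paper exactly.
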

\begin{proof}
  The equivalence (i) $\Longleftrightarrow$ (ii) 
follows from Theorem \ref{thm:homotopy=isom}.
The implication (ii) $\Longrightarrow$ (iii)
is clear.
Since $\K_0(\OA) \cong \Extw^1(\OA) \cong \Z^N/(I-A) \Z^N$,
the equivalences among 
(iii), (iv) and (v) are obvious.
By Lemma \ref{lem:K0Ext1=>K_1Ext0},
we know that the condition (iii) implies that 
$\K_i(\OA)\cong \K_i(\OB)$ and $\Exts^i(\OA) \cong \Exts^i(\OB)$ for $i=1,2$.
Hence Proposition \ref{prop:pi_1} shows $\pi_i(\Aut(\OA))\cong \pi_i(\Aut(\OB)), i=1,2$
which is the condition (i).
\end{proof}
For $m \in \Z$, take a unitary $u_m \in Q(H)$ whose Fredholm index is $m$.
Take a trivial extension $\tau: \OA\rightarrow Q(H)$ which means that  
there exists a $*$-homomorphism $\tau_0: \OA \rightarrow B(H)$ such that 
$\tau = \pi \circ \tau_0$, where $\pi: B(H) \rightarrow Q(H)$ 
is a natural quotient map.
Let us denote by 
$\sigma_m$ the extension $\Ad(u_m)\circ \tau: \OA \rightarrow Q(H)$.
Let $q_A: \Extw(\OA) \rightarrow \Exts(\OA)$ denotes the natural quotient map.
Then the map $\iota_A: m \in \Z \rightarrow [\sigma_m]_s \in \Exts(\OA)$
yields a homomorphism of groups such that 
the sequence 
\begin{equation}\label{eq:ZExtsExtw}
 \Z \overset{\iota_A}{\longrightarrow} \Exts(\OA) \overset{q_A}{\longrightarrow} \Extw(\OA)
\end{equation}
is exact at the middle so that we have
$ \Exts(\OA)/\iota_A(\Z) $ is isomorphic to $\Extw(\OA).$
The sequence \eqref{eq:ZExtsExtw} is rephrased as 
\begin{equation*}
 \Z \overset{\hat{\iota}_A}{\longrightarrow}
 \Z^N/(I-\widehat{A})\Z^N 
 \overset{\hat{q}_A}{\longrightarrow}
 \Z^N/(I-A)\Z^N
\end{equation*}
(\cite[Lemma 3.1]{MaJMAA2024}).
\begin{corollary}
The pair $(\Exts(\OA), \iota_A(1))$ of the group 
$\Exts(\OA)$ and the position
$\iota_A(1)$ in $\Exts(\OA)$ is a complete invariant of the isomorphism class of $\OA$.
It means that the class $[(I-A)[e_1]]$ of the vector 
$(I-A)[e_1] \in \Z^N$, where $ e_1 = [1,0,\dots,0]^t$, 
in the quotient group
$\Z^N/(I - \widehat{A})\Z^N$ is a complete invariant of the isomorphism class of $\OA$.
\end{corollary}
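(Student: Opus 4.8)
The plan is to deduce this Corollary from Theorem \ref{thm:mainthm}, the point being that the single distinguished element $\iota_A(1)$ secretly encodes the second invariant $\Extw(\OA)$. First I would record that the exact sequence \eqref{eq:ZExtsExtw} --- equivalently the tail of \eqref{eq:6termA}, in which $\hat{q}_A$ is surjective --- exhibits $q_A$ as a surjection with kernel exactly $\iota_A(\Z)$, so that
\[
\Exts(\OA)/\iota_A(\Z) \;\cong\; \Extw(\OA).
\]
Hence the pair $(\Exts(\OA), \iota_A(1))$ determines, purely group-theoretically, both $\Exts(\OA)$ itself and the cokernel $\Extw(\OA)$ of the cyclic subgroup generated by the marked element. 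This reduces the whole statement to the equivalence (ii) $\Leftrightarrow$ (iv) already established in Theorem \ref{thm:mainthm}.

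For completeness (the ``if'' direction) I would argue as follows. Suppose $\phi\colon \Exts(\OA)\to\Exts(\OB)$ is a group isomorphism with $\phi(\iota_A(1))=\iota_B(1)$. Then $\phi$ carries the cyclic subgroup $\iota_A(\Z)$ onto $\iota_B(\Z)$, hence descends to an isomorphism of quotients $\Exts(\OA)/\iota_A(\Z)\cong\Exts(\OB)/\iota_B(\Z)$, that is, $\Extw(\OA)\cong\Extw(\OB)$ via the displayed identification. Together with the isomorphism $\Exts(\OA)\cong\Exts(\OB)$ coming from $\phi$ itself, condition (iv) of Theorem \ref{thm:mainthm} holds, whence $\OA\cong\OB$.

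For invariance (the ``only if'' direction) I would invoke the naturality of the exact sequences \eqref{eq:Extcyclicexact} and \eqref{eq:ZExtsExtw} under $*$-isomorphisms: an isomorphism $\Phi\colon\OA\to\OB$ induces a commuting ladder whose middle and right verticals are the induced isomorphisms $\Exts(\OA)\cong\Exts(\OB)$ and $\Extw(\OA)\cong\Extw(\OB)$, and whose left vertical is the identity of $\Z$, since that $\Z$-term is canonical (it records the Fredholm index, and is pinned down by the preserved unital map $\bbC\to\OA$). Commutativity of the left square then forces $\Phi_*(\iota_A(1))=\iota_B(1)$, so $\Phi_*$ is an isomorphism of marked pairs. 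Finally, for the concrete reformulation I identify $\iota_A$ with $\hat{\iota}_A$ through \eqref{eq:6termA}: since $\hat{\iota}_A(m)=[(I-A)[k_i]_{i=1}^N]$ for any integer vector with $\sum_i k_i=m$, taking $m=1$ and $[k_i]_{i=1}^N=e_1$ gives $\iota_A(1)=[(I-A)e_1]$ inside $\Z^N/(I-\widehat{A})\Z^N=\Exts(\OA)$, as claimed.

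The main obstacle will be the invariance direction rather than completeness: an abstract isomorphism $\OA\cong\OB$ only yields, a priori, two separate group isomorphisms, whereas what is required is a \emph{single} isomorphism of $\Exts$ that simultaneously matches the marked elements. This is exactly what naturality of $\iota_A$ supplies, so the crux is to check that $\iota_A$ is functorial with the identity acting on the $\Z$-term --- i.e. that the marked element is genuinely canonical and not merely well defined up to the ambient group structure.
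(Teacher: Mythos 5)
Your proposal is correct and takes essentially the same route the paper intends: the corollary is stated without a separate proof, as an immediate consequence of Theorem \ref{thm:mainthm} together with the identification $\Exts(\OA)/\iota_A(\Z)\cong\Extw(\OA)$ coming from the exact sequence \eqref{eq:ZExtsExtw}, which is precisely your reduction to the equivalence (ii) $\Longleftrightarrow$ (iv). Your explicit verification of the invariance direction via naturality of $\iota_A$, and the computation $\iota_A(1)=[(I-A)e_1]$ through $\hat{\iota}_A$, merely spell out what the paper leaves implicit (the latter being \cite[Lemma 3.1]{MaJMAA2024}).
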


\subsection{$\pi_i(\Aut(\OA\otimes K(H)))$}
In this subsection, we will refer to
 $\pi_i(\Aut(\OA\otimes K(H))).$
\begin{lemma}\label{lem:stabilization1}
$\pi_1(\Aut(\OA\otimes K(H))) \cong \pi_2(\Aut(\OA\otimes K(H))).$
\end{lemma}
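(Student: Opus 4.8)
The plan is to read off both homotopy groups from the explicit formulas in Proposition \ref{prop:pi_1timesK} and to show that, once each side is rewritten purely in terms of the rank and torsion invariants of the $\K$-groups of $\OA$, the two expressions coincide. The only inputs needed are the structure theorem for finitely generated abelian groups, together with the isomorphism $\Z^N/(I-A)\Z^N \cong \Z^N/(I-A^t)\Z^N$ and the torsion-freeness of $\Ker(I-A)$ recorded in Lemma \ref{lem:summary}, and the rank equality $\rank(\K_0(\OA)) = \rank(\K_1(\OA))$ established inside the proof of Lemma \ref{lem:rankTA}.

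First I would fix notation: let $r = \rank(\K_0(\OA)) = \rank(\K_1(\OA))$ and let $T$ denote the finite torsion subgroup of $\K_0(\OA)$. Since $\K_1(\OA) \cong \Ker(I-A^t)$ is a subgroup of $\Z^N$, it is torsion free, so $\K_1(\OA) \cong \Z^r$; and $\K_0(\OA) \cong \Z^N/(I-A^t)\Z^N \cong \Z^r \oplus T$. Because $\Z^N/(I-A)\Z^N$ and $\Z^N/(I-A^t)\Z^N$ have the same Smith normal form, the group $\Extw^1(\OA) = \Z^N/(I-A)\Z^N$ is likewise isomorphic to $\Z^r \oplus T$, while $\Extw^0(\OA) = \Ker(I-A) \cong \Z^r$.

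Next I would substitute these identifications into the two formulas of Proposition \ref{prop:pi_1timesK} and expand the tensor products distributively. For $\pi_1$ one obtains the summand $(\Z^r\oplus T)\otimes(\Z^r\oplus T)$ together with $\Z^r\otimes\Z^r$, which simplifies to $\Z^{2r^2}\oplus T^{2r}\oplus(T\otimes T)$. For $\pi_2$ the two tensor summands $(\Z^r\oplus T)\otimes\Z^r$ and $\Z^r\otimes(\Z^r\oplus T)$ give $\Z^{2r^2}\oplus T^{2r}$, and the remaining term contributes $\Tor(\Z^r\oplus T,\ \Z^r\oplus T)$. Here I would use that $\Tor$ vanishes against the free part and that $\Tor(T,T)\cong T\otimes T$ for finite abelian groups, so this term equals $T\otimes T$ and $\pi_2\cong \Z^{2r^2}\oplus T^{2r}\oplus(T\otimes T)$ as well. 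Comparing the two normal forms yields the claimed isomorphism.

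The only point requiring care is the matching of the torsion contributions: in $\pi_1$ the factor $T\otimes T$ arises directly from $\Extw^1(\OA)\otimes\K_0(\OA)$, whereas in $\pi_2$ the same factor appears only through the $\Tor$ term, so the argument hinges on the elementary identity $\Tor(T,T)\cong T\otimes T$ valid for finite abelian groups. Everything else is bookkeeping of ranks, which stays consistent precisely because of the equality $r=\rank(\K_0(\OA))=\rank(\K_1(\OA))$.
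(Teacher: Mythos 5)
Your proposal is correct and follows essentially the same route as the paper: the paper's proof also writes $\K_0(\OA)\cong\Z^{r_A}\oplus T_A$, notes $\K_1(\OA)\cong\Z^{r_A}$, and substitutes into Proposition \ref{prop:pi_1timesK} to see that both homotopy groups equal $(T_A\otimes T_A)\oplus T_A^{2r_A}\oplus\Z^{2r_A^2}$, implicitly using the identity $\Tor(T_A,T_A)\cong T_A\otimes T_A$ that you make explicit. Your write-up simply spells out the bookkeeping that the paper labels ``direct to see.''
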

\begin{proof}
We write $\K_0(\OA)=\Z^{r_A}\oplus T_A$ with a finite abelian group $T_A$.
By Proposition \ref{prop:pi_1timesK}, 
it is direct to see that 
both $\pi_1(\Aut(\OA\otimes K(H)))$ and 
$\pi_2(\Aut(\OA\otimes K(H)))$
are isomorphic to the abelian group
\begin{equation}\label{eq:TApi1pi2}
(T_A \otimes T_A) \, \oplus \, (T_A \otimes \Z^{r_A}) \, \oplus \, (\Z^{r_A} \otimes T_A)
\, \oplus \, (\Z^{r_A} \otimes \Z^{r_A})
\, \oplus \, (\Z^{r_A} \otimes \Z^{r_A}).
\end{equation}
\end{proof}
\begin{remark}
One can also check that, for a Kirchberg algebra $\A$ with finitely generated K-groups, the isomorphism $\pi_1(\operatorname{Aut}(\A\otimes K(H)))\cong \pi_2(\operatorname{Aut}(\A\otimes K(H)))$ holds if and only if $\rank (\K_0(\A))=\rank (\K_1(\A))$ holds.
\end{remark}
We see that the group structure of the abelian group
of \eqref{eq:TApi1pi2} determins the torsion group $T_A$ and the free abelian group $\Z^{r_A}$
as in the following proposition.
\begin{proposition}\label{prop:stabilization2}
$\pi_1(\Aut(\OA\otimes K(H)))\cong \pi_1(\Aut(\OB\otimes K(H)))$
if and only if $\OA\otimes K(H) \cong \OB\otimes K(H).$
\end{proposition}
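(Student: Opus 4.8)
The plan is as follows. The ``only if'' direction is immediate: an isomorphism $\OA\otimes K(H)\cong \OB\otimes K(H)$ induces a topological isomorphism of the automorphism groups, and hence isomorphisms of all their homotopy groups. For the converse I would reduce the claim to a purely algebraic statement, namely that the group structure of the abelian group \eqref{eq:TApi1pi2} determines both the non-negative integer $r_A$ and the isomorphism class of the finite abelian group $T_A$; equivalently, it determines the pair $(\K_0(\OA), \K_1(\OA)) = (\Z^{r_A}\oplus T_A, \Z^{r_A})$. Once this is established, an isomorphism $\pi_1(\Aut(\OA\otimes K(H)))\cong \pi_1(\Aut(\OB\otimes K(H)))$ forces $r_A = r_B$ and $T_A\cong T_B$, hence $\K_i(\OA)\cong \K_i(\OB)$ for $i=0,1$, and the Kirchberg--Phillips classification of stable Kirchberg algebras (both $\OA\otimes K(H)$ and $\OB\otimes K(H)$ are stable Kirchberg algebras satisfying the UCT) then yields $\OA\otimes K(H)\cong \OB\otimes K(H)$.

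Recovering $r_A$ is easy: by \eqref{eq:TApi1pi2} the torsion-free rank of $\pi_1(\Aut(\OA\otimes K(H)))$ equals $2r_A^2$, and since $r_A\geq 0$ the value $2r_A^2$ determines $r_A$ uniquely. It then remains to recover $T_A$ from the torsion subgroup, which by \eqref{eq:TApi1pi2} is $(T_A\otimes T_A)\oplus T_A^{2r_A}$ with $r_A$ now known. For this I would argue one prime at a time: primary decomposition splits the torsion subgroup as a direct sum over primes $p$ of $(G_p\otimes G_p)\oplus G_p^{2r_A}$, where $G_p$ is the $p$-primary part of $T_A$, since the cross terms $T_A(p)\otimes T_A(q)$ vanish for $p\neq q$.

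Fix $p$ and write $G_p = \bigoplus_{i\geq 1}(\Z/p^i\Z)^{m_i}$. Using $(\Z/p^i\Z)\otimes(\Z/p^j\Z)\cong \Z/p^{\min(i,j)}\Z$, one computes that the multiplicity $c_k$ of $\Z/p^k\Z$ in the $p$-part of the torsion subgroup is $c_k = m_k(2r_A + 2M_{\geq k} - m_k)$, where $M_{\geq k} = \sum_{l\geq k} m_l$. Since $M_{\geq k} = m_k + M_{\geq k+1}$, this rewrites as $c_k = m_k(2r_A + 2M_{\geq k+1} + m_k)$. These multiplicities $c_k$ are invariants of the abstract group, and I would recover the $m_k$ by downward induction on $k$: one checks $c_k>0$ exactly when $m_k>0$, so the top index with $c_k>0$ is the top index with $m_k>0$; and at each step, with $M_{\geq k+1}$ already determined, the right-hand side is strictly increasing in $m_k\geq 0$, so the equation has a unique non-negative solution $m_k$. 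This determines every $G_p$, and therefore $T_A$.

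The step I expect to be the main obstacle is this last one: verifying that the interlocked system of multiplicities $\{c_k\}$ can be untangled uniquely, i.e. that the downward induction is well-posed. Once this bookkeeping is confirmed, the conclusion follows as in the first paragraph. Finally, I note that Lemma \ref{lem:stabilization1} already gives $\pi_1\cong \pi_2$ for these stabilizations, so there is no loss in stating the result in terms of $\pi_1$ alone.
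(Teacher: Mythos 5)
Your proposal is correct, and it follows the paper's overall skeleton — recover $r_A=r_B$ from the torsion-free rank of \eqref{eq:TApi1pi2}, then show the torsion part $(T_A\otimes T_A)\oplus T_A^{2r}$ determines $T_A$, then conclude by classification — but the key uniqueness step is carried out by a genuinely different method. The paper uses the invariant factor decomposition $T_A=\Z/n_1\Z\oplus\cdots\oplus\Z/n_k\Z$ with $n_1|n_2|\cdots|n_k$: it rewrites the torsion part as $\bigoplus_{i=1}^k(\Z/n_i\Z)^{2(k+r-i)+1}$, reads off the top invariant factor (the exponent) to get $n_k=m_l=:n$, then strips off the top cyclic summand — cancelling a common summand $(\Z/n\Z)^{2r+1}$ and absorbing the removed factor into an increased free rank, so that $T'_A$ satisfies the same kind of isomorphism with $r$ replaced by $r+1$ — and inducts downward through the divisibility chain. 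You instead split into $p$-primary parts, derive the closed multiplicity formula $c_k=m_k(2r+2M_{\geq k+1}+m_k)$ for $\Z/p^k\Z$, and recover each $m_k$ by downward induction using strict monotonicity of $m\mapsto m(2r+2M_{\geq k+1}+m)$; your formula and the monotonicity claim check out ($f(m+1)-f(m)=2r+2M_{\geq k+1}+2m+1>0$), so the induction is well-posed, and the cross terms $T_A(p)\otimes T_A(q)=0$ for $p\neq q$ do vanish as you say. Both arguments ultimately rest on uniqueness in the structure theorem for finite abelian groups; the paper's version is shorter but quietly invokes cancellation of finite direct summands, whereas yours is more mechanical and makes the inductive bookkeeping fully explicit. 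One point in your favor: you state the final step — $\K_i(\OA)\cong\K_i(\OB)$ for $i=0,1$ plus Kirchberg--Phillips classification of stable UCT Kirchberg algebras yields $\OA\otimes K(H)\cong\OB\otimes K(H)$ — which the paper's printed proof leaves implicit, ending at $\K_0(\OA)\cong\K_0(\OB)$ (with $\K_1$ free of the same rank handled silently).
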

\begin{proof}
We show the only if part.
We use the same notation as in the above proof,
and there are integers $\{n_i\}_{i=1}^k$, $\{m_j\}_{j=1}^l$ such that
\[0<n_1\leq n_2\leq\dots\leq n_k,\quad n_1|n_2|\cdots |n_k,\]
\[0<m_1\leq m_2\leq\dots\leq m_l,\quad m_1|m_2|\cdots |m_l,\]
\[T_A=\Z/n_1\Z\oplus \Z/n_2\Z\oplus\cdots\oplus\Z/n_k\Z,\quad T_B=\Z/m_1\Z\oplus\Z/m_2\Z\oplus\cdots\oplus\Z/m_l\Z.\]
By the fundamental theorem of finite abelian groups,
these integers are uniquely determined for the groups $T_A, T_B$.
The assumption $\pi_1(\Aut(\OA\otimes K(H)))\cong \pi_1(\Aut(\OB\otimes K(H)))$ and (\ref{eq:TApi1pi2}) yield
\[r_A=r_B=:r,\]
\begin{equation}\label{star}
(T_A\otimes T_A)\oplus T_A^{2r}\cong (T_B\otimes T_B)\oplus T_B^{2r}.
\end{equation}
Compairing the following expressions
\begin{align*}
\bigoplus_{i=1}^k(\Z/n_i\Z)^{2(k+r-i)+1}=&(T_A\otimes T_A)\oplus T_A^{2r}\\
=&(T_B\otimes T_B)\oplus T_B^{2r}\\
=&\bigoplus_{j=1}^l(\Z/m_j\Z)^{2(l+r-j)+1},
\end{align*}
we have $n_k=m_l=: n$.
For the groups
\[T'_A:=\Z/n_1\Z\oplus \dots \oplus\Z/n_{k-1}\Z,\quad T'_B:=\Z/m_1\Z\oplus\dots\oplus\Z/m_{l-1}\Z,\]
one has $T'_A\otimes \Z/n\Z=T'_A, T'_B\otimes \Z/n\Z=T'_B$,
and (\ref{star}) implies
\[(T'_A\otimes T'_A)\oplus {T'_A}^{2(r+1)}\cong (T'_B\otimes T'_B)\oplus {T'_B}^{2(r+1)}.\]
Now we can inductively obtain
\[n_{k-1}=m_{l-1}, n_{k-2}=m_{l-2},\dots,\quad k=l,\]
and this proves $\K_0(\OA)\cong \K_0(\OB)$.
\end{proof}
\begin{remark}
The analogy of the above proposition for general stable Kirchberg algebras $\A, \B$ with finitely generated K-groups dose not hold.
Let $P_\infty$ be the stable Kirchberg algebra KK-equivalent to $S(=C_0(0, 1))$.
Then, two algebras $\A=\OA\otimes K(H)$ and $\B=P_\infty\otimes\OA\otimes K(H)$ have the same homotopy groups of their automorphism groups, but they are not isomorphic in general (see the case that $\OA$ is the Cuntz algebra $\ON, N\geq 3$).
\end{remark}

\section{The invariants $(\Extw(\OA), \Exts(\OA))$}
In this section, we will determine the possible range of the pair $(\Extw(\OA), \Exts(\OA))$. 
\begin{proposition}\label{prop:groups}
For a finitely generated abelian group $M$ 
and an element $e \in\Z\oplus M,$ 
there exists a Cuntz--Krieger algebra $\OA$ for an irreducible non permutation matrix $A$
with entries in $\{0,1\}$ such that 
\begin{equation*}
\Extw(\OA) \cong (\Z\oplus M)/ \Z e, \qquad \Exts(\OA) \cong \Z\oplus M.
\end{equation*}
\end{proposition}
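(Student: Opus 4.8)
The plan is to turn the statement into a single matrix-realization problem and then to read off $\Extw(\OA)$ as a formal consequence. First I would make the link between the two groups explicit. Writing $\Lambda = \{x \in \Z^N \mid \sum_{i=1}^N x_i = 0\}$ and using the identity $I - \widehat{A} = (I-A)(I-R_1)$ together with $(I-R_1)\Z^N = \Lambda$, the formulas of Lemma \ref{lem:summary} read
\[
\Exts(\OA) = \Z^N/(I-A)\Lambda, \qquad \Extw(\OA) = \Z^N/(I-A)\Z^N,
\]
and the exact sequence \eqref{eq:6termA} shows that the quotient map $\hat{q}_A$ identifies $\Extw(\OA)$ with $\Exts(\OA)/\langle \hat{\iota}_A(1)\rangle$, where $\hat{\iota}_A(1) = [(I-A)e_1]$ and $e_1 = [1,0,\dots,0]^t$. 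Consequently it is enough to exhibit an irreducible non-permutation $\{0,1\}$-matrix $A$ for which $\Exts(\OA) \cong \Z\oplus M$ and, under this isomorphism, $\hat{\iota}_A(1) = e$; the assertion about $\Extw(\OA)$ then follows from exactness at the middle term of \eqref{eq:ZExtsExtw}.

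To produce such an $A$, I would pass to the mirror invariant. By the corollary above identifying $(\Exts(\OA), \hat{\iota}_A(1))$ as a complete invariant, this pair carries the same information as $(\K_0(\OA), [1_\OA])$, and Proposition \ref{rel} makes the correspondence explicit, matching $(\Z\oplus M, e)$ with a pair $\big((\Z\oplus M)/\Z e,\, g\big)$. I would then invoke the classical range results for simple Cuntz--Krieger algebras (\cite{Cuntz80}, \cite{CK}, \cite{Ro}): every pair $(G,g)$ consisting of a finitely generated abelian group $G$ and an element $g \in G$ is realised as $(\K_0(\OA), [1_\OA])$ for some irreducible non-permutation $\{0,1\}$-matrix $A$, with $\K_1(\OA)$ automatically free of rank $\rank(G)$. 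Applying this to $G = (\Z\oplus M)/\Z e$ and the class $g$ yields a matrix $A$, and completeness of the invariant forces $(\Exts(\OA), \hat{\iota}_A(1)) \cong (\Z\oplus M, e)$. As an alternative, self-contained route one can build $A$ directly: decompose $M$ into invariant factors, take block-diagonal all-ones blocks $J_{a_i+1}$ to contribute the cyclic summands $\Z/a_i$ (as for $\mathcal{O}_{a_i+1}$) together with further blocks for the free summands, and then insert a minimal set of connecting entries $1$ to make $A$ irreducible and non-permutation, using the first state to position the class $e$.

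I expect the main obstacle to be the compatibility of the dichotomy in $e$ with the realizable range. One must treat separately the case where $e$ has infinite order, so that $\rank \Extw(\OA) = \rank \Exts(\OA) - 1$ (which corresponds to the vanishing of the sum map $s_A$), and the case where $e$ is torsion, so that $\rank \Extw(\OA) = \rank \Exts(\OA)$ (corresponding to $s_A \neq 0$); by Lemma \ref{lem:rankTA} both possibilities are consistent, and one has to check that the target pair falls into the realizable range in each. If instead one carries out the explicit construction, the delicate point is precisely the insertion of the connecting entries: they must secure irreducibility and the non-permutation property while leaving both cokernels $\Z^N/(I-A)\Z^N$ and $\Z^N/(I-\widehat{A})\Z^N$ and the position of $e$ unchanged, which is the only step that is not a formal consequence of the preceding lemmas.
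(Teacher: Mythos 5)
Your overall strategy---realize $(\K_0(\OA),[1_\OA]_0)$ first and transfer the result to $(\Extw(\OA),\Exts(\OA))$---can be made to work, but the step that performs the transfer is a genuine gap. The assertion that ``completeness of the invariant forces $(\Exts(\OA),\hat{\iota}_A(1))\cong(\Z\oplus M,e)$'' is a non sequitur: completeness is an injectivity statement (the invariant separates isomorphism classes) and cannot compute the value of the invariant on the algebra you have just constructed. What is actually needed is the forward computation of $\Exts(\OA)$ from $(\K_0(\OA),[1_\OA]_0)$, namely $\Exts(\OA)\cong\Z\oplus\bigl(\K_0(\OA)/\Z[1_\OA]_0\bigr)$, which follows from the six term sequence of $0\to S\OA\to C_\OA\to\bbC\to0$ (giving $\K_1(C_\OA)\cong\K_0(\OA)/\Z[1_\OA]_0$, and $\K_0(C_\OA)$ free because it is an extension of a subgroup of $\Z$ by the free group $\K_1(\OA)=\Ker(I-A^t)$) together with the unnaturally split UCT and the rank count of Lemma \ref{lem:rankTA}; your proposal never performs or cites this computation. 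Appealing to Proposition \ref{rel} does not repair this and is in fact circular: the identification of $\mathcal{E}$ with the actual range of the pairs $(\Extw(\OA),\Exts(\OA))$ rests on Corollary \ref{range}, which is deduced from the very Proposition \ref{prop:groups} under proof (the non-circular core of Proposition \ref{rel} is just Lemma \ref{reci} plus the UCT computation above). Two smaller points: the pointed normalization $\hat{\iota}_A(1)=e$ is stronger than what you need and is left unjustified---the UCT splitting is unnatural, so pinning down the class of $\hat{\iota}_A(1)$ would require extra work, whereas only the two group isomorphisms are needed; and the range theorem with prescribed position of the unit class is due to Arklint--Ruiz \cite{CCK}, not to \cite{Cuntz80}, \cite{CK}, \cite{Ro} alone, which is exactly how the paper realizes $(\K_0(\OB),[1_\OB]_0,\K_1(\OB))\cong(\Z^r\oplus T,e,\Z^r)$.

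Once repaired, your route becomes a uniform version of the paper's argument for torsion $e$, and would arguably be cleaner than the paper's case split: set $G=(\Z\oplus M)/\Z e$, use Lemma \ref{reci} (i.e.\ \cite[Corollary 3.10]{Sogabe2022}) to find $d\in G$ with $G/\Z d\cong M$, realize $(\K_0(\OA),[1_\OA]_0,\K_1(\OA))\cong(G,d,\Z^{\rank(G)})$ by \cite{CCK}, so that $\Extw(\OA)\cong\K_0(\OA)\cong(\Z\oplus M)/\Z e$, and then the computation above yields $\Exts(\OA)\cong\Z\oplus(G/\Z d)\cong\Z\oplus M$, the cases $d$ torsion and $d$ non-torsion contributing the extra $\Z$ through $\Hom(\K_0(C_\OA),\Z)$ and through the rank of $G/\Z d$, respectively. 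By contrast, the paper treats non-torsion $e$ by a different mechanism: it realizes the class of $0\to\Z\to\Z\oplus M\to(\Z\oplus M)/\Z e\to0$ in $\operatorname{Ext}^1_\Z$ as the $\K_0$-sequence of an actual unital essential extension $E$ of $\OB$ with $\K_0(E)\cong\Z\oplus M$, and then invokes strong $\K$-theoretic duality \cite{MaJMAA2024, PennigSogabe} and \cite{CCK} to replace the resulting Kirchberg algebra by a Cuntz--Krieger algebra; only for torsion $e$ does it argue as you would. Your alternative ``explicit matrix'' construction remains a sketch: as you yourself note, inserting connecting entries while controlling both cokernels and the position of $e$ is the entire difficulty, and the paper deliberately avoids it by delegating the unit position to \cite{CCK}.
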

Before going to the proof of Proposition \ref{prop:groups},
we recall the range of K-groups of the purely infinite simple Cuntz--Krieger algebras.
For a finite abelian group 
\[T:=\Z/n_1\Z\oplus\cdots\oplus\Z/n_k\Z,\quad n_i\geq 2,\]
and an integer $r\geq 0$,
one can find a matrix $A$ of size $N:=(r+\sum_{i=1}^k(1+n_i))+3$ satisfying
\[\Z^N/(I-A)\Z^N=\Z^r\oplus T,\quad \Ker (I-A)=\Z^r\]
as follows.
Let $N_i, i=1, 2, \dots, k$ denote the $(1+n_i)\times (1+n_i)$-matrix whose entries are all $1$ and let $I_r$ denote the identity matrix of size $r$.
Then, the matrix $A$ is defined by
\[A:=\left[
\begin{array}{ccccccc}
&&&0&0&1\\
&\text{\Huge D}&&\vdots&\vdots&\vdots\\
&&&0&0&1\\
1&\dots&1&0&0&1\\
0&\dots&0&0&1&1\\
0&\dots&0&1&1&1\\
\end{array}
\right],\quad D:= \left[
\begin{array}{cccc}
I_r&&&\\
&N_1&&\\
&&\ddots&\\
&&&N_k
\end{array}
\right],\]
and it is straightforward to check that
\[\Z^N/(I_N-A)\Z^N\cong \Z^r\oplus\bigoplus_{i=1}^k\Z^{(1+n_i)}/(I_{1+n_i}-N_i)\Z^{(1+n_i)}=\Z^r\oplus T,\]
\[\Ker (I_N-A)=\Z^r.\]
In the directed graph with $N$-vertecies determined by $A$,
every two vertecies are connected by a directed path passing through the $N$th vertex,
which implies that $A$ is an irreducible, non permutation matrix.
Combining this with \cite{CCK}, for an arbitrary element $e\in \Z^r\oplus T$,
one can find a purely infinite simple Cuntz--Krieger algebra $\OB$ (i.e., $\OB$ with an irreducible non permutation matrix $B$) satisfying
\[(\K_0(\OB), [1_\OB]_0, \K_1(\OB))\cong (\Z^r\oplus T, e, \Z^r).\]
\begin{proof}[{Proof of Proposition \ref{prop:groups}}]
First,
we show the statement when $e\in\Z\oplus M$ is a non torsion element.
In this case one has a short exact sequence $0\to\Z\to \Z\oplus M\to (\Z\oplus M)/\Z e\to 0$ of abelian groups that gives an element of $\operatorname{Ext}_\Z^1((\Z\oplus M)/\Z e, \Z)$.
Let $\OB$ be a purely infinite simple Cuntz--Krieger algebra with $\K_0(\OB)\cong (\Z\oplus M)/\Z e$.
Then, the following UCT
\[0\to \operatorname{Ext}_\Z^1((\Z\oplus M)/\Z e, \Z)\to \Extw(\OB)\to \operatorname{Hom}_\Z(\K_1(\OB), \Z)\to 0\]
implies that there is an essential unital extension $0\to K(H) \to E\to \OB\to 0$ whose six term exact sequence splits into the following short exact sequences
\[0\to \Z\to \K_0(E)\to (\Z\oplus M)/\Z e\to 0,\]
\[0\to \K_1(E)\to \K_1(\OB)\to 0\]
and the first sequence is equivalent to the extension $\Z\to \Z\oplus M\to (\Z\oplus M)/\Z e$.
Hence one has $\Z\oplus M \cong K_0(E)$,
and the strong K-theoretic duality (see \cite{MaJMAA2024, PennigSogabe}) implies that there is a unital Kirchberg algebra $\mathcal{B}$ satisfying
\[\Exts(\mathcal{B})=K_0(E),\quad \Extw(\mathcal{B})=\K_0(\OB),\quad \mathcal{B}\underset{\sqK}{\sim} D(S\OB)\underset{\sqK}{\sim}\OB.\]
Applying \cite{CCK} to $\mathcal{B}\otimes K(H)\cong \OB\otimes K(H)$,
$\mathcal{B}$ is isomorphic to a purely infinite simple Cuntz--Krieger algebra $\OA$ (i.e, $\OA$ with an irreducible non permutation matrix $A$).

Next, we show the statement when $e$ is a torsion element.
We write $M:=\Z^m\oplus \tilde{T},\; e:=(0, \tilde{t})\in M,\; (\Z\oplus M)/\Z e=\Z^{1+m}\oplus T$ where $m\geq 0$ is an integer and $\tilde{T}, T=\tilde{T}/\Z \tilde{t}$ are finite abelian groups.
By \cite[Cor. 3.10]{Sogabe2022},
there exists a non torsion element $d\in \Z\oplus T$ satisfying
\[\tilde{T}=(\Z\oplus T)/\Z d.\]
For a purely infinite simple Cuntz--Krieger algebra $\OA$ with \[(\K_0(\OA), [1_\OA]_0, K_1(\OA))\cong (\Z^m\oplus(\Z\oplus T), (0, d), \Z^{m+1}),\]
one has
\[\Extw(\OA)\cong \K_0(\OA)\cong (\Z\oplus M)/\Z e, \quad \K_1(C_\OA)\cong \Z^m\oplus (\Z\oplus T)/\Z d=\Z^m\oplus \tilde{T}.\]
We have the following six term exact sequence of $0\to S\OA\to C_\OA\to \mathbb{C}\to 0$
\[\xymatrix{
\K_1(\OA)\ar[r]&\K_0(C_\OA)\ar[r]&\K_0(\mathbb{C})\ar[d]\\
0\ar[u]&\K_1(C_\OA)\ar[l]&\K_0(\OA),\ar[l]
}\]
where the vertical map $\K_0(\mathbb{C})\to \K_0(\OA)$ is identified with \[\K_0(u_\OA) : \K_0(\mathbb{C})\ni [1]_0\mapsto [1_\OA]_0=(0, d)\in \K_0(\OA).\]
Since $d$ is a non torsion element,
the map $\K_0(\mathbb{C})\to \K_0(\OA)$ is injective,
and the above six term exact sequence shows $\Z^{m+1}=\K_1(\OA)\cong \K_0(C_\OA)$.
Thus, the UCT
\[0\to \operatorname{Ext}_\Z^1(\K_1(C_\OA), \Z)\to \Ext_s(\OA)\to \operatorname{Hom}_\Z(\K_0(C_\OA), \Z)\to 0\]
implies $\Exts (\OA)\cong \tilde{T}\oplus\Z^{m+1}=\Z\oplus M$.
\end{proof}
Since $\Exts^1(\OA) = \K_0(\TAT)$,
Lemma \ref{lem:rankTA} yields the following corollary.
\begin{corollary}\label{range}
\begin{align*}
& \{(\Extw(\OA), \Exts(\OA) ) \mid A: 
\text{irreducible non permutation matrix with entries in } \{0,1 \}\} \\
= & \{(G, \Z\oplus M) \mid M:  \text{finitely generated abelian group, }
G \cong (\Z\oplus M)/\Z e \text{ for some } e \in \Z\oplus M \}
\end{align*}
\end{corollary}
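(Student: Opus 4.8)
The plan is to establish the claimed set equality by proving the two inclusions separately, noting at the outset that the substantive work in the reverse direction has already been carried out in Proposition \ref{prop:groups}, so that the corollary is essentially a repackaging of that proposition together with the rank computation. For the reverse inclusion ($\supseteq$), I would argue as follows: given a pair $(G, \Z \oplus M)$ in the right-hand set, we have a finitely generated abelian group $M$ and an element $e \in \Z \oplus M$ with $G \cong (\Z \oplus M)/\Z e$, and Proposition \ref{prop:groups} directly produces an irreducible non-permutation matrix $A$ with entries in $\{0,1\}$ such that $\Extw(\OA) \cong (\Z \oplus M)/\Z e \cong G$ and $\Exts(\OA) \cong \Z \oplus M$. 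Thus $(\Extw(\OA), \Exts(\OA))$ is the prescribed pair and lies in the left-hand set.

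For the forward inclusion ($\subseteq$), I would start from an arbitrary irreducible non-permutation matrix $A$ and show that $(\Extw(\OA), \Exts(\OA))$ has the required shape. First, by Lemma \ref{lem:summary} we have $\Exts(\OA) = \Exts^1(\OA) = \K_0(\TAT) = \Z^N/(I - \widehat{A})\Z^N$, which is a cokernel of an integer matrix and hence finitely generated. The key input is Lemma \ref{lem:rankTA}, which gives $\rank(\K_0(\TAT)) = \rank(\K_1(\TAT)) + 1 \geq 1$, so that $\Exts(\OA)$ carries a free summand of rank at least one. Writing a finitely generated group of positive rank as $\Z^r \oplus T$ with $r \geq 1$ and re-bracketing as $\Z \oplus (\Z^{r-1} \oplus T)$, I may fix a finitely generated abelian group $M$ together with an isomorphism $\Exts(\OA) \cong \Z \oplus M$.

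It then remains to identify $\Extw(\OA)$ as the correct cyclic quotient. Here I would invoke the exact sequence \eqref{eq:ZExtsExtw}, which is exact at the middle term and therefore yields $\Extw(\OA) \cong \Exts(\OA)/\iota_A(\Z)$. Since $\iota_A$ is a homomorphism of groups, $\iota_A(\Z) = \Z\,\iota_A(1)$, so putting $e := \iota_A(1)$ and transporting it through the isomorphism $\Exts(\OA) \cong \Z \oplus M$ chosen above, we obtain $\Extw(\OA) \cong (\Z \oplus M)/\Z e$ for this element $e \in \Z \oplus M$. This exhibits $(\Extw(\OA), \Exts(\OA))$ as a member of the right-hand set and completes the inclusion, and with it the proof.

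The only genuinely delicate point is the strict positivity of $\rank(\Exts(\OA))$, which is precisely what guarantees the distinguished free $\Z$ summand and hence the asymmetry between the two coordinates of the target set: the second coordinate is always forced to split off a copy of $\Z$, while the first is an arbitrary cyclic quotient of it determined by the position $\iota_A(1)$. This positivity is exactly the content of Lemma \ref{lem:rankTA}. Beyond this, everything reduces to the already-established realizability in Proposition \ref{prop:groups} and the cyclic-quotient relation \eqref{eq:ZExtsExtw}, so I anticipate no further obstacle apart from the routine bookkeeping of the identification $e = \iota_A(1)$ under the chosen splitting of $\Exts(\OA)$.
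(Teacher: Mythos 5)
Your proposal is correct and takes essentially the same route as the paper, which derives the corollary in one line from exactly the two ingredients you use: Proposition \ref{prop:groups} for the inclusion $\supseteq$, and Lemma \ref{lem:rankTA} (via $\Exts(\OA)=\K_0(\TAT)$, forcing a free summand $\Z$) for the inclusion $\subseteq$. Your additional bookkeeping---splitting $\Exts(\OA)\cong\Z\oplus M$ and identifying $\Extw(\OA)\cong\Exts(\OA)/\Z\,\iota_A(1)$ via \eqref{eq:ZExtsExtw}---is precisely what the paper leaves implicit, and it is sound since the surjectivity of the quotient map is recorded in \eqref{eq:6termA}.
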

One may notice that complete invariant of $\OA$ appears in two different ways.
One is a pair of $G$ and its element $d\in G$ (i.e., $(\K_0(\OA), [1_\OA]_0)$ due to \cite{Ro}),
and the other is the pair of the groups $(G, \Z\oplus M)$ as in the above corollary.
We will explain the relationship between these two aspects via the reciprocality.
\begin{lemma}[{c.f. \cite[Corollary 3.10]{Sogabe2022}}]\label{reci}
Let $M$ be a finitely generated abelian group.
For any $e\in\Z\oplus M$,
there exists an element $\tilde{d}\in \Z\oplus M$ satisfying
\[M\cong (\Z\oplus M)/\langle e, \tilde{d}\rangle.\]
\end{lemma}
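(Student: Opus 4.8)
The plan is to produce the required $\tilde d$ by realizing $M$ as the quotient of $\Z\oplus M$ under a surjection whose kernel is two-generated and contains $e$. Write $M=\Z^m\oplus T$ with $T$ the torsion subgroup, so $\Z\oplus M=\Z^{m+1}\oplus T$, and write $e=(v,\tau)$ with $v\in\Z^{m+1}$, $\tau\in T$. The single tool I would use repeatedly is the following reciprocity for finite abelian groups: the relation ``$G$ has a subgroup isomorphic to $A$ with quotient isomorphic to $C$'' is symmetric in $A$ and $C$. This follows from Pontryagin duality, since dualizing a short exact sequence $0\to A\to G\to C\to 0$ of finite abelian groups yields $0\to C\to G\to A\to 0$ after the identifications $\widehat{G}\cong G$, $\widehat{A}\cong A$, $\widehat{C}\cong C$.

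I would first treat the case where $e$ is non-torsion, i.e.\ $v\neq 0$. Writing $v=gv'$ with $v'$ primitive and $g\ge 1$, I extend $v'=f_1$ to a basis $f_1,\dots,f_{m+1}$ of $\Z^{m+1}$. Choosing $\tilde d$ of the form $\tilde d=hf_1+s$ with $h\in\Z$, $s\in T$, the subgroup $\langle e,\tilde d\rangle\subseteq \Z f_1\oplus T$ leaves the complementary free summand $\langle f_2,\dots,f_{m+1}\rangle\cong\Z^m$ untouched, so the problem collapses to finding $(h,s)$ with $(\Z\oplus T)/\langle(g,\tau),(h,s)\rangle\cong T$. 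Here $Q:=(\Z\oplus T)/\langle(g,\tau)\rangle$ is finite and sits in $0\to T\to Q\to \Z/g\Z\to 0$; by reciprocity $Q$ also carries a cyclic subgroup $C\cong\Z/g\Z$ with $Q/C\cong T$. Lifting a generator of $C$ along the surjection $\Z\oplus T\to Q$ gives the desired $(h,s)$, whence $(\Z\oplus M)/\langle e,\tilde d\rangle\cong\Z^m\oplus T=M$.

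Next I would treat the case $e=\tau\in T$ of order $\delta$, where I take $\tilde d=(\delta,\mu)$ with $\mu\in T$ to be chosen and its free part in the adjoined $\Z$. Quotienting first by $e$ replaces $M$ by $\bar M=\Z^m\oplus\bar T$ with $\bar T=T/\langle\tau\rangle$, and then quotienting by $(\delta,\bar\mu)$ yields $\Z^m\oplus\big[(\Z\oplus\bar T)/\langle(\delta,\bar\mu)\rangle\big]$, so it suffices to arrange $(\Z\oplus\bar T)/\langle(\delta,\bar\mu)\rangle\cong T$. This group is an extension of $\Z/\delta\Z$ by $\bar T$, and by reciprocity applied to $0\to\langle\tau\rangle\to T\to\bar T\to 0$ the group $T$ is itself an extension of $\Z/\delta\Z$ by $\bar T$. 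Since letting $\bar\mu$ range over $\bar T$ realizes every class of $\operatorname{Ext}^1_\Z(\Z/\delta\Z,\bar T)\cong\bar T/\delta\bar T$, I can choose $\bar\mu$ (and any lift $\mu$) so that the extension class is exactly that of $T$, giving the required isomorphism.

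The hard part will be the torsion bookkeeping concentrated in these two finite problems: one must know that a finite abelian group presented as an extension of $\Z/g\Z$ by $T$ automatically carries the reciprocal structure (a cyclic subgroup of order $g$ with quotient $\cong T$), and, in the torsion case, that the one-parameter family $(\Z\oplus\bar T)/\langle(\delta,\bar\mu)\rangle$ genuinely hits the isomorphism class of $T$. Both are resolved by Pontryagin duality together with the standard identification $\operatorname{Ext}^1_\Z(\Z/\delta\Z,\bar T)\cong\bar T/\delta\bar T$; the elementary rank and order counts (the free rank drops by exactly one and the torsion order is preserved) guarantee that these are the only classes that need to be matched. This mirrors, and slightly extends, the finite-group computation underlying \cite[Corollary 3.10]{Sogabe2022}.
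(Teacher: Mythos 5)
Your proof is correct, and it takes a genuinely more self-contained route than the paper's. The paper's entire argument is the normalization you perform at the start of your non-torsion case — write $M=\Z^m\oplus T$, use an automorphism of $\Z^{m+1}$ (a primitive vector extended to a basis) to assume the free part of $e$ lies in a single coordinate, so that the summand $\Z^m$ is untouched — followed by a single citation of \cite[Corollary 3.10]{Sogabe2022} as a black box producing $\tilde d\in\Z\oplus 0\oplus T$ with $(\Z\oplus 0\oplus T)/\langle e,\tilde d\rangle\cong T$, with no torsion/non-torsion case split. You instead reprove the one-$\Z$ finite-group step from scratch: in the non-torsion case via Pontryagin-duality reciprocity (dualizing $0\to T\to Q\to \Z/g\Z\to 0$ shows $Q$ has a cyclic subgroup of order $g$ with quotient $\cong T$, and lifting its generator along $\Z\oplus T\twoheadrightarrow Q$ gives $\tilde d$), and in the torsion case via the identification $\Ext_\Z^1(\Z/\delta\Z,\bar T)\cong\bar T/\delta\bar T$, which shows the family $(\Z\oplus\bar T)/\langle(\delta,\bar\mu)\rangle$ exhausts all extensions of $\Z/\delta\Z$ by $\bar T$, including the one isomorphic to $T$ supplied by dualizing $0\to\langle\tau\rangle\to T\to\bar T\to 0$. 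I checked the details — injectivity of $T\to Q$ and $\bar T\to(\Z\oplus\bar T)/\langle(\delta,\bar\mu)\rangle$, the order counts, the splitting off of the untouched $\Z^m$, and the fact that equivalence of extensions yields the abstract isomorphism you need — and all are sound, including the degenerate cases $T=0$ and $e=0$. What the paper buys by citing the corollary is brevity and consistency (the same Corollary 3.10 is also invoked in the proof of Proposition \ref{prop:groups}); what your argument buys is transparency, since it in effect reproves the essential content of that corollary by elementary duality and $\Ext$ computations, at the cost of a case split that the citation renders unnecessary.
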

\begin{proof}
We write $M=\Z\oplus T$ for a finite abelian group $T$ and an integer $m\geq 0$ so that $\Z\oplus M=\Z\oplus(\Z^m\oplus T)$.
We may assume that $e$ is of the form $(n, (0, t)), n\in\Z$,
and \cite[Corollary 3.10]{Sogabe2022} gives an element $\tilde{d}\in \Z\oplus 0\oplus T$ satisfying 
\[(\Z\oplus 0\oplus T)/\langle e, \tilde{d}\rangle\cong T.\]
Thus, the elements $e, \tilde{d}$ satisfy $(\Z\oplus M)/\langle e, \tilde{d}\rangle\cong M$.
\end{proof}
For $d\in G$ and $d'\in G'$, we say that $(G, d)$ and $(G', d')$ are equivalent if there is an isomorphism $\theta : G\to G'$ with $\theta (d)=d'$,
and denote by $[G, d]$ the equivalence class.
We write \[\mathcal{K}:=\{[G, d] | G : \text{finitely generated abelian group} \}=\{[\K_0(\OA), [1_\OA]_0]\},\]
\[\mathcal{E} := \{(G, \Z\oplus M) | G\cong (\Z\oplus M)/\Z e\}/_{\sim \text{isom}}=\{(\Extw(\OA), \Exts(\OA))\}/_{\sim \text{isom}},\]
where, in the definition of $\mathcal{E}$, two pairs $(G, \Z\oplus M)$ and $(G', \Z\oplus M')$ are said to be isomorphic if $G\cong G'$ and $\Z\oplus M\cong \Z\oplus M'$ hold.
\begin{proposition}\label{rel}
Let $G, M$ be finitely generated abelian groups. 
\begin{enumerate}
\renewcommand{\theenumi}{(\roman{enumi})}
\renewcommand{\labelenumi}{\textup{\theenumi}}
\item
For any $[G, d]\in \mathcal{K}$, one has $(G, \Z\oplus (G/\Z d))\in \mathcal{E}$.\\
\item
For any $(G, \Z\oplus M)\in\mathcal{E}$, there exist two elements $e, \tilde{d}\in \Z\oplus M$ satisfying
\[G\cong (\Z\oplus M)/\Z e, \quad (\Z\oplus M)/\langle e, \tilde{d} \rangle\cong M,\]
and the class $[(\Z\oplus M)/\Z e, \tilde{d}+\Z e]\in \mathcal{K}$ dose not depend on the choice of $e, \tilde{d}$.\\
\item
The above two constructions yield $\mathcal{K}=\mathcal{E}$ by which $[\K_0(\OA), [1_\OA]_0]$ corresponds to $(\Extw(\OA), \Exts(\OA))$.
\end{enumerate}
\end{proposition}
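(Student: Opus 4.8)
The plan is to interpret both $\mathcal{K}$ and $\mathcal{E}$ as the set of isomorphism classes of Cuntz--Krieger algebras, via two complete invariants, and then to check that the maps in (i) and (ii) realize this identification explicitly. On one side, the assignment $\OA \mapsto [\K_0(\OA),[1_\OA]_0]$ is a bijection from isomorphism classes onto $\mathcal{K}$: every class $[G,d]$ with $G$ finitely generated is realized by some purely infinite simple $\OA$ (with $\K_1(\OA)\cong\Z^{\rank G}$) as recalled before Proposition \ref{prop:groups} via \cite{CCK}, and two such algebras with the same $[\K_0,[1_0]]$ then have the same $\K_1$, hence are isomorphic by \cite{Ro}. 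On the other side, $\OA\mapsto(\Extw(\OA),\Exts(\OA))$ is a bijection onto $\mathcal{E}$: surjectivity is Corollary \ref{range}, and injectivity is exactly the implication (iv)$\Rightarrow$(ii) of Theorem \ref{thm:mainthm}. Statement (iii) will follow once the two explicit maps are matched with this chain of bijections.

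For (i), I would realize $[G,d]$ by a Cuntz--Krieger algebra $\OA$ with $(\K_0(\OA),[1_\OA]_0)=(G,d)$ and compute $\Exts(\OA)$ directly. Applying $\K$-theory to the mapping-cone sequence \eqref{eq:SACA} gives
\[0\to\K_1(\OA)\to\K_0(C_\OA)\to\Z\xrightarrow{\partial}\K_0(\OA)\to\K_1(C_\OA)\to0,\]
where $\partial(1)=[1_\OA]_0=d$ is the class of the unit. Hence $\K_1(C_\OA)\cong G/\Z d=:M'$, while the left-hand piece yields $\rank\K_0(C_\OA)=\rank\K_1(\OA)+\rank(\Ker\partial)$. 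Feeding $\K_1(C_\OA)$ and $\K_0(C_\OA)$ into the $\K$-homology UCT for $\Exts^1(\OA)=\K^0(C_\OA)$, whose $\operatorname{Ext}_\Z^1$--term is the torsion subgroup of $M'$ and whose $\Hom$--term is free, produces $\Exts(\OA)\cong\Z^{\rank\K_0(C_\OA)}\oplus\operatorname{Tor}(M')$ after the sequence splits. A short rank count, separating the cases $d$ torsion and $d$ non-torsion and using $\rank\K_1(\OA)=\rank\K_0(\OA)=\rank G$ (Lemma \ref{lem:rankTA}), then shows $\rank\K_0(C_\OA)=1+\rank M'$ in either case, so that $\Exts(\OA)\cong\Z\oplus M'=\Z\oplus(G/\Z d)$. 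Since $\Extw(\OA)=\K_0(\OA)=G$, the pair $(\Extw(\OA),\Exts(\OA))$ equals $(G,\Z\oplus(G/\Z d))$, which is by construction an element of $\mathcal{E}$; this proves (i) and simultaneously identifies the map of (i) with the composite bijection $\mathcal{K}\to\{\text{CK algebras}\}\to\mathcal{E}$.

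For (ii), the existence of $e$ with $G\cong(\Z\oplus M)/\Z e$ is the defining condition of $\mathcal{E}$ (Corollary \ref{range}), and the existence of $\tilde{d}$ with $(\Z\oplus M)/\langle e,\tilde{d}\rangle\cong M$ is Lemma \ref{reci}. Transporting $\tilde d+\Z e$ through a fixed isomorphism $(\Z\oplus M)/\Z e\cong G$ produces an element $d'\in G$, and
\[G/\Z d'\cong\bigl((\Z\oplus M)/\Z e\bigr)/\langle\tilde d+\Z e\rangle=(\Z\oplus M)/\langle e,\tilde d\rangle\cong M.\]
Consequently the map of (i) sends $[G,d']$ to $(G,\Z\oplus(G/\Z d'))=(G,\Z\oplus M)$. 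For well-definedness, a second choice produces some $d''$ that likewise satisfies $[G,d'']\mapsto(G,\Z\oplus M)$; since the map of (i) is injective (having been identified above with an honest complete invariant, via Theorem \ref{thm:mainthm}), we conclude $[G,d']=[G,d'']$. Thus the assignment in (ii) is well-defined and is a two-sided inverse of the map in (i), which gives (iii) together with the stated correspondence $[\K_0(\OA),[1_\OA]_0]\leftrightarrow(\Extw(\OA),\Exts(\OA))$.

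I expect the main obstacle to be the well-definedness in (ii): at the purely group-theoretic level the element $d'$ with $G/\Z d'\cong M$ is far from unique up to $\Aut(G)$, so independence of the chosen $e,\tilde d$ cannot be read off from the groups alone. The resolution is to route this through the classification, where the injectivity of the map in (i), i.e.\ Theorem \ref{thm:mainthm}, forces all admissible $d'$ into a single equivalence class. A secondary technical point is the clean identification $\Exts(\OA)\cong\Z\oplus(\K_0(\OA)/\Z[1_\OA]_0)$, where the rank bookkeeping must be carried out carefully in the torsion and non-torsion cases for $[1_\OA]_0$.
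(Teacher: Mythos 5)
Your proof is correct, but it takes a genuinely different route from the paper's at both key points. For (i), the paper argues purely group-theoretically: applying Lemma \ref{reci} to $d\in\Z\oplus G$ yields $\tilde{e}$ with $G\cong(\Z\oplus G)/\langle d,\tilde{e}\rangle=(\Z\oplus(G/\Z d))/\langle\tilde{e}+\Z d\rangle$, which exhibits directly the element required by the definition of $\mathcal{E}$; you instead realize $[G,d]$ by a Cuntz--Krieger algebra and compute $\Exts(\OA)\cong\Z\oplus(\K_0(\OA)/\Z[1_\OA]_0)$. That computation does occur in the paper, but only at the end of the proof of (iii) to identify the correspondence, with the rank relation taken from Lemma \ref{lem:rankTA} via the duality $D(C_\OA)\underset{\sqK}{\sim}\TAT$, whereas you rederive it from the cone sequence by a case split (a small mis-citation: the equality $\rank\K_1(\OA)=\rank\K_0(\OA)$ is the Smith-normal-form remark after Lemma \ref{lem:summary}, not Lemma \ref{lem:rankTA}). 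More substantially, for the well-definedness in (ii) and the identity $\mathcal{K}\to\mathcal{E}\to\mathcal{K}$, the paper invokes the purely algebraic uniqueness result \cite[Proposition 2.19]{Sogabe2022}: for finitely generated abelian groups, $G_1\cong G_2$ together with $G_1/\Z d_1\cong G_2/\Z d_2$ already forces $[G_1,d_1]=[G_2,d_2]$. Your diagnosed ``main obstacle'' --- that $d'$ with $G/\Z d'\cong M$ is far from unique up to $\Aut(G)$ --- is therefore mistaken: uniqueness does hold at the level of groups, and that is exactly what the cited proposition supplies. Your workaround, routing uniqueness through injectivity of $\OA\mapsto(\Extw(\OA),\Exts(\OA))$, i.e.\ through Theorem \ref{thm:mainthm} (iv)$\Rightarrow$(ii) together with the realization results of \cite{CCK} and \cite{Ro}, is nevertheless sound and non-circular, since Theorem \ref{thm:mainthm} is established before Section 5 independently of Proposition \ref{rel}. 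What your approach buys is economy: Proposition \ref{rel} becomes a formal consequence of the classification. What the paper's approach buys is independence: the bijection $\mathcal{K}=\mathcal{E}$ is an elementary group-theoretic fact (modulo the cited proposition), with the $C^*$-algebraic identification grafted on only in the final computation, so it does not lean on the homotopy-group and reciprocality machinery behind Theorem \ref{thm:mainthm}.
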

\begin{proof}
First, we prove (i).
Applying Lemma \ref{reci} to $d\in \Z\oplus G$,
there exists an element $\tilde{e}\in \Z\oplus G$ satisfying
\[G=(\Z\oplus G)/\langle d, \tilde{e}\rangle =(\Z\oplus (G/\Z d))/ \langle (\tilde{e}+\Z d)\rangle, \]
and one has $(G, \Z\oplus (G/\Z d))\in\mathcal{E}$.

Next, we show (ii).
Existence of $e, \tilde{d}$ follows from the definition of $\mathcal{E}$ and Lemma \ref{reci}. 
Since $(\Z\oplus M)/\Z e\cong G$ and $((\Z\oplus M)/\Z e)/\langle \tilde{d}+\Z e\rangle\cong M$ hold for any choice of $e, \tilde{d}$,
\cite[Proposition 2.19]{Sogabe2022} proves the statement.

Finally,
we show (iii).
The statements (i) and (ii) provide two correspondences $\mathcal{K}\xrightarrow{(i)}\mathcal{E}$ and $\mathcal{E}\xrightarrow{(ii)}\mathcal{K}$, and it is straightforward to check that the composition $\mathcal{E}\xrightarrow{(ii)}\mathcal{K}\xrightarrow{(i)}\mathcal{E}$ is the identity.

We show that the composition $\mathcal{K}\xrightarrow{(i)}\mathcal{E}\xrightarrow{(ii)}\mathcal{K}$ is the identity.
For a given $[G, d]\in\mathcal{K}$, the correspondence $\mathcal{K}\xrightarrow{(i)}\mathcal{E}$ gives the pair $(G, \Z\oplus (G/\Z d))$,
and the correspondence $\mathcal{E}\xrightarrow{(ii)}\mathcal{K}$ sends the pair to $[G, d']$ satisfying $G/\Z d'\cong G/\Z d$.
Thus, 
\cite[Proposition 2.19]{Sogabe2022} shows $[G, d']=[G, d]$.

Since $\Extw(\OA)\cong \K_0(\OA)$,
the following computation implies  that $\mathcal{K}\xrightarrow{(i)}\mathcal{E}$ sends $[\K_0(\OA), [1_\OA]_0]$ to $(\Extw(\OA), \Exts(\OA))$:
\begin{align*}
\Exts(\OA)=&KK(C_\OA, \mathbb{C})\\
\cong &\operatorname{Ext}_\Z^1(\K_1(C_\OA), \Z)\oplus\operatorname{Hom}_\Z(\K_0(C_\OA), \Z)\\
\cong& \Z^{\rank (\K_0(C_\OA))}\oplus \operatorname{Tor}(\K_1(C_\OA))\\
=&\Z\oplus \Z^{\rank(\K_1(C_\OA))}\oplus\operatorname{Tor}(\K_1(C_\OA))\\
=&\Z\oplus \K_1(C_\OA)\\
\cong&\Z\oplus (\K_0(\OA)/\Z [1_\OA]_0).
\end{align*}
\end{proof}

\section{Examples}
{\bf 1.} $\ON$:
Let $\ON$ be the Cuntz algebra of order $1<N \in \N$ (see \cite{Cuntz77}). 
It was shown that 
$\K_0(\ON) = \Z/(1 -N)\Z, \K_1(\ON) = 0 $ in \cite{Cuntz81}
and also 
$\Exts^1(\ON)=\Z, \Exts^0(\ON) =0$ in \cite{PS}, \cite{PP}. 
By Proposition \ref{prop:pi_1} and Corollary \ref{cor:pi_1}, 
 we have 
\begin{align*}
\pi_1(\Aut(\ON)) 
\cong & (\Exts^1(\ON) \otimes \K_0(\OA)) \, \oplus \,  (\Exts^0(\ON) \otimes \K_1(\OA)) \\
\cong & \Z \otimes \Z/(1 -N)\Z \\
\cong &  \Z/(1 -N)\Z ,\\
\pi_2(\Aut(\ON)) 
\cong & (\Exts^1(\ON) \otimes \K_1(\OA)) \,  \oplus \, (\Exts^0(\ON) \otimes \K_0(\OA)) \\
      & \oplus \Tor(\Exts^1(\ON),\K_0(\OA)) \cong 0, \\
\pi_1(\Aut(\ON \otimes K(H)))      
\cong &
\pi_2(\Aut(\ON\otimes K(H)))       
\cong \Z/ (1-N)\Z      .
\end{align*}

\medskip

\noindent
{\bf 2.} $\ON\otimes M_k(\bbC)$:
We note that $\ON\otimes M_k(\bbC)$ is realized as a Cuntz--Krieger algebra
$\mathcal{O}_{A^{<k>}}$ for the $Nk\times Nk$ matrix $A^{<k>}$  defined by
\[A^{<k>}:=\left[
\begin{array}{ccc|c}
0_N      & \cdots      &0_N   & [N] \\ \hline
I_N      &        &   &0_N   \\ 
       & \ddots &   &\vdots   \\
       &        & I_N &0_N   \\
\end{array}
\right], \quad [N]:=\left[
\begin{array}{ccc}
1&\cdots&1\\
\vdots&\ddots&\vdots\\
1&\cdots& 1
\end{array}
\right],\]
where $[N], 0_N, I_N$ are $N\times N$ matrices.
Hence the formulas of Proposition \ref{prop:pi_1} and Corollary \ref{cor:pi_1} 
are applicable for $\ON\otimes M_k(\bbC)$.
Since $\K_1(\ON\otimes M_k(\bbC)) = \K_1(\ON) =0$ and hence $\Exts^0(\ON) =0$,
we have
$$
\pi_1(\Aut(\ON \otimes M_k(\bbC)))\cong \Exts^1(\ON \otimes M_k(\bbC)) \otimes \Z/(1-N)\Z.
$$
By Paschke and Salinas \cite{PS},
we know that $\Exts^1(\ON \otimes M_k(\bbC)) \cong \Z \oplus \Z/(N-1, k)\Z$, 
where $(N-1, k)$ is the greatest common diviser of $N-1$ and $k.$ 
Hence we have  
$$
\pi_1(\Aut(\ON \otimes M_k(\bbC)))\cong  \Z/(1-N)\Z \, \oplus \, \Z/(N-1, k)\Z.
$$
Similarly we have
$$
\pi_2(\Aut(\ON \otimes M_k(\bbC)))\cong  \Z/(k, N-1)\Z.
$$
As $(\ON \otimes M_k(\bbC))\otimes  K(H) \cong \ON \otimes K(H),$
we have 
$\pi_i (\Aut(\ON \otimes M_k(\bbC))\otimes K(H))) \cong \Z/(1-N)\Z $ for $i=1,2.$

\medskip

\noindent
{\bf 3.}
Let $$A=
\begin{bmatrix}
1 & 1 & 1 \\
1 & 1 & 1 \\
1 & 0 & 0   
\end{bmatrix},
\qquad 
B= A^t =
\begin{bmatrix}
1 & 1 & 1 \\
1 & 1 & 0 \\
1 & 1 & 0   
\end{bmatrix}.
$$
As in \cite{MaPre2021exts}, we know that 
$$
\K_0(\OA) \cong \K_0(\OB) \cong \Z/2\Z, \qquad
\K_1(\OA) = \K_1(\OB) = 
\Exts^0(\OA) = \Exts^0(\OB) =0,
$$
and
$$
\Exts^1(\OA) \cong \Z,\qquad
\Exts^1(\OB) \cong \Z \oplus \Z/2\Z,
$$
so that 
$$
\pi_1(\Aut(\OA)) \cong \Z/2\Z, \qquad \pi_1(\Aut(\OB)) \cong \Z/2\Z \oplus Z/2\Z.
$$
Hence we know that $\OA$ is not isomorphic to $\OB.$
 Similarly we have
$$
\pi_2(\Aut(\OA)) \cong 0, \qquad \pi_2(\Aut(\OB)) \cong \Z/2\Z
$$
 and
$$
\pi_i(\Aut(\OA\otimes K(H))) 
\cong \pi_i(\Aut(\OB\otimes K(H)))
\cong
\Z/2\Z,
\qquad i=1,2. 
$$
 
The results of the present paper will be generalized to a wider class of Kirchberg algebras in a forthcoming paper.

\medskip

{\it Acknowledgment:}
K. Matsumoto is supported by JSPS KAKENHI Grant Number 19K03537.
T. Sogabe is supported by Research Fellow of the Japan Society for the Promotion of Science.

\end{document}